\newtheorem{theorem}{Theorem}[section]
\newtheorem{lemma}[theorem]{Lemma}
\newtheorem{conjecture}[theorem]{Conjecture}
\newtheorem{claim}{}[theorem]
\newcommand{\del}{\backslash}
\title{Graphs with girth $2\ell$ and without longer even holes are $3$-colorable}
\author{Rong Chen\\
\\
Center for Discrete Mathematics,\ \ Fuzhou University\\
Fuzhou,\ \ P. R. China}
\begin{document}

\maketitle

\footnote{Mathematics Subject Classification: 05C15, 05C17, 05C69

Email: rongchen@fzu.edu.cn
}

\begin{abstract}
For a number $\ell\geq 2$, let $\mathcal{H}_{\ell}$ denote the family of graphs which have girth $2\ell$ and have no even hole with length greater than $2\ell$. Wu, Xu, and Xu conjectured that every graph in $\bigcup_{\ell\geq2}\mathcal{H}_{\ell}$ is 3-colorable. In this paper, we prove that every graph in $\mathcal{H}_{\ell}$ is 3-colorable for any integer $\ell\geq5$.

{\it\bf Key Words}: chromatic number; holes; girth
\end{abstract}

\section{Introduction}

All graphs considered in this paper are finite, simple, and undirected.
A {\em proper coloring} of a graph $G$ is an assignment of colors to the vertices of $G$ such that no two adjacent vertices receive the same color. A graph is {\em $k$-colorable} if it has a proper coloring using at most $k$ colors. The {\em chromatic number} of $G$, denoted by $\chi(G)$, is the minimum number $k$ such that $G$ is $k$-colorable.

A {\em hole} in a graph is an induced cycle of length at least four. A hole is {\em even} (resp. {\em odd}) if it has even (resp. odd) length. The Strong Perfect Graph Theorem  asserts that a graph is perfect if and only if it induces neither odd holes nor their complements. For any odd-hole-free graph $G$, we have $\chi(G)\leq 2^{2^{\omega(G)+2}}$ by the main result proved by Scott and Seymour in \cite{AS16}, while Ho\'ang \cite{AS} conjectured that $\chi(G)\leq \omega(G)^2$. For any even-hole-free graph, Addario-Berry, Chudnovsky, Havet, Reed and Seymour \cite{AC25}, and Chudnovsky and Seymour \cite{MC23}, proved that every even-hole-free graph has a vertex whose neighbours are the union of two cliques, which implies $\chi(G)\leq 2\omega(G)-1$.

The {\em girth} of a graph $G$, denoted by $g(G)$, is the minimum length of a cycle in $G$. 
For any integer $\ell\geq2$, let $\mathcal{G}_{\ell}$ be the family of graphs that have girth $2\ell + 1$ and have no odd holes of length at least $2\ell + 3$, and let $\mathcal{H}_{\ell}$ be the family of graphs that have girth $2\ell$ and have no even holes of length at least $2\ell + 2$.
Robertson conjectured in \cite{ND11} that the Petersen graph is  the only graph in $\mathcal{G}_2$ that is 3-connected and internally 4-connected.
Plummer and Zha \cite{PM14} disproved Robertson's conjecture and proposed 
the strong conjecture that all graphs in $\mathcal{G}_2$ are 3-colorable, which was solved by Chudnovsky and Seymour \cite{MC22}. 
Generalizing the result of \cite{XB17}, Wu, Xu, and Xu \cite{WD2204} 
proposed the following conjecture.

\begin{conjecture}\label{conj}(\cite{WD2204}, Conjecture 6.1.)
Every graph in $\bigcup_{\ell\geq2}\mathcal{G}_{\ell}$ is $3$-colorable.
\end{conjecture}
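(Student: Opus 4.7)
The plan is to proceed by induction on $|V(G)|$, taking $G \in \mathcal{G}_\ell$ to be a minimum counterexample for some fixed $\ell \geq 2$. The base case $\ell = 2$ is the Plummer--Zha conjecture resolved by Chudnovsky and Seymour \cite{MC22}, so the real task is to extract from that proof a framework that is uniform in $\ell$, or at least to isolate the structural features of $\mathcal{G}_2$-graphs that extend to higher $\ell$. Throughout, I would use freely that $G$ is triangle-free (in fact $K_4$-free), since the girth is at least $5$, so Brooks' theorem lets me assume $\delta(G) \geq 3$ and that $G$ is $2$-connected.

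First I would analyze the family $\cC$ of induced $(2\ell+1)$-cycles in $G$; by hypothesis these are precisely the odd holes of $G$, and every odd closed walk contains one. A basic rigidity property should hold: if two cycles $C_1, C_2 \in \cC$ share a path whose length is strictly between $1$ and $\ell - 1$, then the symmetric difference of $C_1$ and $C_2$ contains an induced odd hole of length greater than $2\ell + 1$, contradicting the hypothesis. Hence two members of $\cC$ are either vertex-disjoint, meet in a single vertex, or share a path of length at least $\ell$. Using this, I would aim to show that at every vertex $v$ lying on some odd cycle, the collection of odd cycles through $v$ has a very constrained local structure, from which a reducible configuration can be extracted.

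The second step is to find such a reducible configuration. The candidates I would examine are (a) a vertex whose deletion keeps the graph in $\mathcal{G}_\ell$, (b) an edge whose contraction does so, or (c) a bounded-size substructure all of whose $3$-colorings extend. The rigidity lemma above should force, in a graph of minimum degree at least $3$, the existence of a vertex $v$ together with a path of degree-$2$ structure along some shortest odd cycle that can be rerouted to a smaller graph still in $\mathcal{G}_\ell$. The induction hypothesis then gives a $3$-coloring of the reduced graph, which would be lifted back to $G$ by an explicit extension argument, exploiting the fact that the rerouting only alters a short segment.

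The main obstacle, and the reason this conjecture remains open for $\ell \geq 3$, is precisely in controlling the reduction: a local deletion or contraction can convert a long chorded odd walk into an induced odd hole exceeding $2\ell + 1$, ejecting the reduced graph from $\mathcal{G}_\ell$. Preventing such creations appears to require a \emph{global} structure theorem, producing a bounded list of configurations one of which must appear in every minimum counterexample and each of which admits a direct $3$-coloring. Proving such a list exists uniformly in $\ell$, rather than through the ad hoc enumeration that sufficed for $\ell = 2$, is where I expect the bulk of the technical work to lie; the rigidity of $\cC$ above is the natural starting point, but bridging from rigidity to a finite obstruction list is the decisive step I do not see how to carry out cleanly.
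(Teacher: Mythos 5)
This statement is Conjecture~\ref{conj}, which the present paper does not prove: it is recorded as a conjecture of Wu, Xu, and Xu, and the paper merely reports that it has since been settled by combining Chudnovsky--Seymour ($\ell=2$), Wu--Xu--Xu ($\ell=3$), Wang--Wu ($\ell=4$), and Chen ($\ell\geq5$); the body of this paper instead treats the even-hole analogue $\mathcal{H}_{\ell}$. So there is no in-paper proof to compare against, and your submission must stand on its own as a proof of the conjecture. It does not: you state explicitly at the end that ``bridging from rigidity to a finite obstruction list is the decisive step I do not see how to carry out cleanly,'' so what you have written is a research plan with an acknowledged hole at the crucial point, not a proof.

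Beyond the admitted gap, the one concrete lemma you do assert is wrong as stated. If $C_1,C_2$ are two $(2\ell+1)$-cycles sharing a path of length $k$, their symmetric difference is a closed walk of length $2(2\ell+1)-2k$, which is \emph{even}; in the family $\mathcal{G}_{\ell}$ there is no restriction on even holes, so no contradiction follows from that symmetric difference, and your claimed trichotomy (disjoint, one common vertex, or common path of length at least $\ell$) is not established. The correct rigidity statements in this line of work (compare Lemma~\ref{easy case} here, which plays the analogous role for $\mathcal{H}_{\ell}$) come from analyzing the three ears of the theta subgraph $C_1\cup C_2$ and tracking which of the three cycles are odd, which is a genuinely different computation. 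If you want to pursue this program, the working proofs in the cited papers do not go through deletion/contraction reductions at all: they fix a shortest odd hole $C$, classify the ``jumps'' over $C$ (Lemmas~\ref{jump-length}, \ref{4-vtx}, \ref{parallel jumps} are the even-hole versions), and derive a degree-$2$ vertex or a small cutset, which is a different and more local route to reducibility than the obstruction-list strategy you sketch.
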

Wu, Xu and Xu \cite{WD22} proved that Conjecture \ref{conj} holds for $\ell=3$. Recently, Chen \cite{C25} proved that Conjecture \ref{conj} holds for $\ell\geq5$. 
More recently, Wang and Wu \cite{WW23} further proved that Conjecture \ref{conj} holds for $\ell=4$, and finally solve Conjecture \ref{conj}. 
Whether are all graphs in $\mathcal{H}_{\ell}$ also 3-colorable? Wu, Xu, and Xu 
conjectured

\begin{conjecture}\label{conj-e}
Every graph in $\bigcup_{\ell\geq2}\mathcal{H}_{\ell}$ is $3$-colorable.
\end{conjecture}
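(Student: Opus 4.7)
}
Let $G \in \mathcal{H}_\ell$ be a counterexample with $|V(G)|$ minimum and, subject to that, with $|E(G)|$ maximum among non-$3$-colorable graphs in $\mathcal{H}_\ell$. Standard reductions show that $G$ is $2$-connected and $\delta(G)\geq 3$: any cut-vertex splits $G$ into smaller graphs in $\mathcal{H}_\ell$ whose $3$-colorings can be amalgamated, while any vertex of degree at most $2$ can be deleted, $3$-colored by induction, and re-inserted. Fix a shortest cycle $C$ of $G$, which has length exactly $2\ell$, and write $C=u_0u_1\cdots u_{2\ell-1}u_0$. The whole analysis revolves around classifying how the remaining vertices attach to $C$.

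The first block of work (the attachment lemma) handles $\ell\geq 3$ uniformly. For any vertex $v\notin V(C)$ with two neighbors $u_i,u_j\in V(C)$ at cyclic distance $d$, the cycles closed through $v$ have lengths $d+2$ and $(2\ell-d)+2$; both must be $\geq 2\ell$, and if either is even then it must equal $2\ell$. This forces $d\in\{2\ell-2,2\ell-1,2\ell\}\pmod{2\ell}$ up to symmetry, so external vertices attach to $C$ in very restricted patterns. The second block (the extension lemma) tracks how two external vertices can interact: any short path between them combined with their attachments to $C$ would close either a cycle shorter than $2\ell$ or an even hole longer than $2\ell$, both forbidden. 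Iterating these constraints lets us either locate a vertex of degree $2$ (a contradiction with $\delta(G)\geq 3$) or exhibit a reducible configuration---a contractible edge, a removable theta, or a pair of twins---whose reduction preserves membership in $\mathcal{H}_\ell$, contradicting minimality. For $\ell\geq 5$ this is essentially the content of the present paper, where the large girth gives enough slack for the reducible configurations to always appear.

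For the small cases $\ell=2,3,4$ the attachment lemma yields weaker constraints and has to be supplemented. For $\ell=4$ and $\ell=3$, the short even holes force enough chord-free structure that a mild refinement of the extension lemma (tracking second-neighborhoods of $C$ rather than just $N(C)$) still locates a reducible configuration; here I expect to borrow the layered BFS technique used in \cite{WW23,C25} for the odd analog, starting from $C$ and showing that each BFS layer is a disjoint union of paths, which allows a greedy $3$-coloring whose failures would produce a forbidden long even hole. The case $\ell=2$ is substantially more delicate because the only forbidden configurations are triangles and even holes of length $\geq 6$: one is essentially asking to $3$-color a triangle-free graph whose only induced even cycles are $C_4$'s. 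The plan here is to analyze the auxiliary graph $G'$ obtained from $G$ by identifying, for each maximal $K_{2,n}$-subgraph, the two vertices of its stable side; one checks that this identification preserves membership in $\mathcal{H}_2$ and strictly decreases the vertex count, and the induction closes provided $G'$ is not trivial.

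The main obstacle is precisely the $\ell=2$ case. The $K_{2,n}$-contraction step may introduce new $4$-cycles or destroy the girth condition, and ruling this out seems to require a global lemma asserting that in a graph in $\mathcal{H}_2$, the $C_4$'s assemble into a bipartite ``skeleton'' whose complement (the subgraph of odd-hole edges) is sparse. Producing such a lemma, or replacing it by an entirely different argument leveraging the theorem of Addario-Berry et al.\ that every even-hole-free graph contains a bisimplicial vertex (applied to a suitable minor of $G$), is where the genuinely new work is required; the $\ell\geq 3$ portions of the plan are refinements of techniques already present in the odd-hole literature.
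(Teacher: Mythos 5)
You are proposing a proof of Conjecture~\ref{conj-e} in full, but what you have written is a research outline with genuine gaps, not a proof --- and note that the paper itself does not prove this conjecture either: it establishes only the case $\ell\geq5$ (Theorem~\ref{main thm}), via the stronger Theorem~\ref{main thm+} that every graph in $\mathcal{H}_\ell$ has a degree-$2$ vertex or a $K_1$- or $K_2$-cut. The central gap in your plan is the ``reducible configuration'' step. You assert that one can always exhibit ``a contractible edge, a removable theta, or a pair of twins---whose reduction preserves membership in $\mathcal{H}_\ell$,'' but no argument is given, and the claim is doubtful on its face: contracting an edge or identifying vertices can create short cycles (destroying girth $2\ell$) or long even holes, so preservation of $\mathcal{H}_\ell$ is exactly what must be proved and is the hard part. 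The paper deliberately avoids any such graph operation; its argument for $\ell\geq5$ is a purely structural analysis of a fixed even hole $C$ --- short and local jumps over $C$ (Lemmas~\ref{find short jump}--\ref{jump-length}), a classification of induced $K_4$-subdivisions into odd, even (impossible), and balanced types (Lemmas~\ref{odd K4}--\ref{bal K4}), and the prism-type Lemma~\ref{prism} --- culminating in a $3$-vertex path on $C$ that violates Lemma~\ref{xyz} unless a degree-$2$ vertex or small cut exists. Your plan gestures at this but supplies none of the machinery, and your minimality setup (maximizing $|E(G)|$) is itself suspect, since adding edges need not keep $G$ in $\mathcal{H}_\ell$.

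Two further concrete problems. First, your ``attachment lemma'' is mis-stated: the cyclic distance $d$ between two neighbours of an external vertex satisfies $d\leq\ell$, so the constraint $d+2\geq 2\ell$ has no solution when $\ell\geq3$; the correct conclusion is simply that every vertex outside $C$ has at most one neighbour in $C$ (the paper's Lemma~\ref{neighbour in C}), not that attachments occur at distances $d\in\{2\ell-2,2\ell-1,2\ell\}$. (Also, the longer of the two cycles through $v$ need not be induced, so the even-hole hypothesis cannot be applied to it directly.) Second, for $\ell=2$ you yourself concede that the $K_{2,n}$-identification may fail to preserve $\mathcal{H}_2$ and that a ``genuinely new'' lemma is required; this is consistent with the paper's remark that, since all complete bipartite graphs lie in $\mathcal{H}_2$, no theorem of the form ``degree-$2$ vertex or small cut'' can hold there, so any approach in this family breaks for $\ell=2$. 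As it stands, your proposal proves nothing beyond what is plausible-sounding: the $\ell\geq5$ portion defers to arguments you have not reconstructed, and the $\ell\in\{2,3,4\}$ portion is explicitly conjectural.
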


In this paper, we prove that Conjecture \ref{conj-e} holds for all integers $\ell\geq5$. That is,
\begin{theorem}\label{main thm}
For each integer $\ell\geq5$, all graphs in $\mathcal{H}_{\ell}$ are $3$-colorable.
\end{theorem}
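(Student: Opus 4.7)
The natural strategy is to argue by contradiction via a minimum counterexample. Suppose $G\in\mathcal{H}_\ell$ with $\ell\geq 5$ is not $3$-colorable, chosen with $|V(G)|+|E(G)|$ as small as possible. Then $\chi(G)\geq 4$ and, by the usual greedy argument (remove a low-degree vertex, $3$-color the rest by induction, extend), one gets $\delta(G)\geq 3$. To make this induction work cleanly, it is convenient to state the theorem for the slightly larger class of graphs with girth at least $2\ell$ and no even hole of length greater than $2\ell$, so that deleting a vertex stays in the class.

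I would then extract as much local structure as possible around a shortest cycle $C=v_0v_1\cdots v_{2\ell-1}v_0$. Setting $N_i:=N_G(v_i)\setminus V(C)$ and $N^+:=\bigcup_i N_i$, the girth alone gives that for $\ell\geq 3$ every vertex outside $V(C)$ has at most one neighbour on $C$, each $N_i$ is independent, and for $\ell\geq 4$ no edge joins two distinct $N_i$'s (otherwise the shorter arc of $C$ between the endpoints produces a cycle of length at most $\ell+3<2\ell$). Since $\delta(G)\geq 3$, every $u\in N_i$ has at least two neighbours in a second shell $L_2$ disjoint from $V(C)\cup N^+$. I would next analyse how vertices of $L_2$ can connect different $N_i$'s: each short connecting path, closed up through an arc of $C$, yields a cycle whose length is forced both by the girth and by the no-long-even-hole hypothesis to be odd or to equal exactly $2\ell$, and this pins down the structure of $L_2$ very tightly.

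The crux of the argument is to extract a reducible configuration from this rigidity. I would aim to produce a small vertex set $S$ near $C$ such that $G-S$ still lies in the appropriate class (hence is $3$-colorable by minimality), and such that the boundary between $S$ and $G-S$ imposes at most two distinct colour-demands on any vertex of $S$ in every valid $3$-coloring of $G-S$. Extending such a coloring to $S$ then contradicts the non-$3$-colorability of $G$. The candidate configurations are modelled on those used by Chen~\cite{C25} in the proof of Conjecture~\ref{conj} for $\ell\geq 5$, with $C$ the shortest cycle and $S$ a carefully chosen subset of $V(C)\cup N^+\cup L_2$.

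The main obstacle is precisely this last step: the structural rigidity near $C$ is genuine, but translating it into a reducible configuration is delicate. I expect one needs a sequence of technical lemmas analogous to those in the odd-girth case, but reworked because $C$ now has even length, so the two arcs of $C$ between any two of its vertices have the same parity rather than opposite parities. This swaps the roles played by the odd-hole-type and even-hole-type hypotheses at many places in the auxiliary lemmas, and getting these parity arguments consistent uniformly for all $\ell\geq 5$ is what I anticipate to be the technically hardest part.
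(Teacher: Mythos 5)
Your proposal is a plan, not a proof, and the gap is exactly where you say it is: the ``crux'' of extracting a reducible configuration is never carried out, and that step is the entire content of the theorem. The preliminary observations you do make are correct and match the paper's Lemma \ref{neighbour in C} and Lemma \ref{jump-length} (every vertex off the shortest even cycle $C$ has at most one neighbour on $C$; connecting paths between different $N_i$'s have lengths pinned down by parity and the no-long-even-hole hypothesis). But from that point on you only assert that ``a sequence of technical lemmas analogous to those in the odd-girth case'' should exist. The paper's actual route is to prove the stronger Theorem \ref{main thm+}: every graph in $\mathcal{H}_\ell$ has a degree-$2$ vertex or a $K_1$- or $K_2$-cut, after which $3$-colorability follows by gluing colorings along the cut. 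Establishing Theorem \ref{main thm+} occupies Sections 3 and 4 and requires a detailed classification of induced theta subgraphs and odd/even/balanced $K_4$-subdivisions (Lemmas \ref{easy case}--\ref{prism}), a taxonomy of short and local jumps over $C$ with their types and lengths (Lemmas \ref{type-e}--\ref{s-path}), and a final case analysis locating a $3$-vertex path on $C$ that violates the connectivity dichotomy of Lemma \ref{xyz}. None of this is present or even sketched in your proposal, and it is not routine: the hypothesis $\ell\geq 5$ enters only in that final analysis, so any argument that does not reach that level of detail cannot explain why the theorem is stated for $\ell\geq 5$ rather than $\ell\geq 2$.

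A secondary issue is that your proposed reduction target differs from the paper's and is harder to realize. You want a set $S$ such that every $3$-coloring of $G-S$ extends to $S$ because each vertex of $S$ sees at most two colour-demands; you give no candidate for $S$ and no mechanism forcing the two-demand property. The paper instead reduces to a clique cutset of size at most $2$ (or a degree-$2$ vertex), for which the extension step is trivial, and this is also the shape of the reduction in Chen's odd-hole paper that you cite as your model --- so your plan is not actually ``modelled on'' that argument. Your remark about enlarging the class to graphs of girth at least $2\ell$ so that vertex deletion is closed is a sensible bookkeeping point (and arguably patches a small imprecision in the paper's own deduction of Theorem \ref{main thm} from Theorem \ref{main thm+}), but it does not advance the main argument.
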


In fact, we prove a little stronger result than Theorem \ref{main thm}.
\begin{theorem}\label{main thm+}
For each integer $\ell\geq5$, all graphs in $\mathcal{H}_{\ell}$ have either a degree-$2$ vertex or a $K_1$- or $K_2$-cut.
\end{theorem}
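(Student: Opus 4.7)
The argument is by contradiction: suppose $G\in\mathcal{H}_\ell$ with $\ell\ge 5$ has minimum degree at least $3$ and admits neither a $K_1$- nor a $K_2$-cut, so that for every edge $xy$ the graph $G-\{x,y\}$ is connected. Since $g(G)=2\ell$, $G$ contains an even hole $C$ of length exactly $2\ell$; fix one, with cyclic vertices $c_0,\ldots,c_{2\ell-1}$. The first step is to establish, using only the girth and the prohibition on even holes longer than $2\ell$, a short list of local attachment lemmas: every vertex outside $V(C)$ has at most one neighbour on $C$ (two neighbours at cyclic distance $d\le\ell$ would close a cycle of length $d+2\le\ell+2<2\ell$); and for any ear $P$ of $C$ of length $p$ whose endpoints split $C$ into arcs of lengths $d$ and $2\ell-d$, with $P$ chosen minimally both cycles $P\cup Q$ are induced, so both $p+d$ and $p+2\ell-d$ are $\ge 2\ell$ and any even value among them equals $2\ell$. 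Since these two lengths have the same parity, for a single ear we are always in one of two rigid cases: both cycles are odd holes (forcing $p\ge\ell$) or both have length exactly $2\ell$ (forcing $d=p=\ell$).

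I would then apply the no-$K_2$-cut hypothesis edge by edge around $C$. For each edge $c_ic_{i+1}$, the graph $G-\{c_i,c_{i+1}\}$ is connected, so the two open arcs of $C-\{c_i,c_{i+1}\}$ are joined by a path in $G\setminus\{c_i,c_{i+1}\}$; pick a shortest such path to get an ear $P_i$ whose endpoints straddle $\{c_i,c_{i+1}\}$. As $i$ ranges over the $2\ell$ edges of $C$, the family $\{P_i\}$ either immediately yields a contradiction against a single $P_i$, or forces the existence of two ears $P,P'$ whose endpoints interleave on $C$. Two such interleaving ears divide $C$ into four arcs, and combining each of $P,P'$ with pairs of arcs, together with combining $P$ and $P'$ through pieces of $C$, produces a family of induced cycles whose lengths and parities must simultaneously respect the girth bound $\ge 2\ell$ and the even-hole bound $\le 2\ell$.

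The main technical obstacle is the combinatorial analysis of this interleaved configuration. A case split is needed on the parities of $|P|,|P'|$, on the four arc lengths, and on whether $P,P'$ are vertex-disjoint or share internal vertices (in which case shortcutting produces a sub-ear). The plan is to show, by inclusion–exclusion on the lengths of the induced cycles so constructed, that at least one of them is an even hole of length strictly greater than $2\ell$, contradicting $G\in\mathcal{H}_\ell$. The hypothesis $\ell\ge 5$ enters precisely here: the four arcs have total length $2\ell$, and enough slack is needed for the parity bookkeeping to produce a forbidden even cycle regardless of the special degenerate case $d=p=\ell$ on a single ear. The template I would follow is the one developed by Chen \cite{C25} for the odd-hole family $\mathcal{G}_\ell$, with the roles of odd and even parities exchanged throughout; the cases $\ell\in\{2,3,4\}$ of Conjecture~\ref{conj-e} presumably require genuinely separate arguments, as happens for the odd-hole analogue in \cite{WD22,WW23}.
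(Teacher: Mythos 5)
Your opening moves match the paper's (fix an even hole $C$ of length $2\ell$, note that every outside vertex has at most one neighbour on $C$, and derive the dichotomy for a single jump: either both cycles are odd and the jump has length $\ge\ell$, or everything has length exactly $\ell$ --- these are Lemmas \ref{neighbour in C}, \ref{easy case} and \ref{jump-length}). After that, however, there are two genuine gaps. First, your mechanism for producing ears is broken: removing the two ends of an \emph{edge} $c_ic_{i+1}$ of $C$ leaves $C-\{c_i,c_{i+1}\}$ as a single path, not ``two open arcs,'' so the connectivity of $G-\{c_i,c_{i+1}\}$ tells you nothing. The hypothesis has to be exploited as in Lemma \ref{xyz}: for a $3$-vertex path $xyz$ on $C$ one deletes the \emph{non-adjacent} pair $\{x,z\}$, and then one still needs $\delta(G)\ge 3$ together with the non-cut property of the edges $\{x,y\}$ and $\{y,z\}$ to extract a jump; the outcome is not a single clean ear but one of three possibilities (a short jump, a local $(x,z)$-jump across $y$, or a local jump across one vertex), and the local jumps --- whose interiors are \emph{not} anticomplete to $C$ --- must be carried through the whole argument.

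Second, and more seriously, your endgame rests on the claim that two interleaving ears always force an even hole longer than $2\ell$. That is false: $C$ together with two crossing jumps is an induced $K_4$-subdivision, and the paper's Lemmas \ref{odd K4} and \ref{bal K4} show that odd and balanced $K_4$-subdivisions are perfectly consistent with membership in $\mathcal{H}_\ell$ (only the all-even case is excluded, Lemma \ref{even K4}); the parity bookkeeping yields strong length constraints (e.g.\ two ears of length one, or an ear of length $>\ell$), not an immediate contradiction. Indeed Lemma \ref{bal K4}(2.1) sometimes concludes by \emph{finding} a $K_1$- or $K_2$-cut rather than a forbidden cycle. The paper therefore has to classify short jumps into type-o and type-e, control the set $S$ of endpoints of short jumps (Lemmas \ref{type-e}, \ref{4-vtx}, \ref{parallel jumps}, \ref{s-path}, plus the prism configuration of Lemma \ref{prism}), and run a long case analysis on $|S|$ to locate a $3$-vertex path violating Lemma \ref{xyz}; this is where $\ell\ge 5$ (i.e.\ $|C|\ge 10$) is actually used. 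None of this is recoverable from the sketch as written, so the proposal does not constitute a proof.
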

\begin{proof}[ Proof of Theorem \ref{main thm} assuming Theorem \ref{main thm+}.]
Assume not. Let $G\in \mathcal{H}_{\ell}$ be a minimal counterexample to Theorem \ref{main thm}. Then $\delta(G)\geq3$. It follows from Theorem \ref{main thm+} that $G$ has a $K_1$- or $K_2$-cut. Then there are two induced subgraphs $G_1,G_2$ of $G$ with $G=G_1\cup G_2$ and such that $G_1\cap G_2$ is a vertex or an edge. Since $G_1,G_2$ are 3-colorable by the choice of $G$, so is $G$, a contradiction. 
\end{proof}
Since all complete bipartite graphs are in $\mathcal{H}_2$, the methods used in this paper can not be generalized to solve Conjecture \ref{conj-e} holding for $\ell=2$. 
But the methods probably can be generalized to work for the the case $\ell=4$ as the condition $\ell\geq5$ is only used in the last proof of this paper.


\section{Preliminaries}
A {\em cycle} is a connected $2$-regular graph. Let $P$ be an $(x,y)$-path and $Q$ a $(y,z)$-path.
When $P$ and $Q$ are internally disjoint paths, 
let $PQ$ denote the $(x,z)$-path $P\cup Q$. Evidently, $PQ$ is a path when $x\neq z$, and $PQ$ is a cycle when $x=z$. Let $P^*$ denote the set of internal vertices of $P$. When $u,v\in V(P)$, let $P(u,v)$ denote the sub-path of $P$ with ends $u, v$. For simplicity, we will let $P^*(u,v)$ denote $(P(u,v))^*$.

Let $G$ be a graph. We say a path $P$ of $G$ is an {\em ear} of $G$ if all vertices in $P^*$ have degree-2 in $G$ while both ends of $P$ have degree at least 3. For any subset $U\subseteq V(G)$, let $G[U]$ be the induced subgraph of $G$ defined on $U$. For subgraphs $H$ and $H'$ of $G$, set $|H|:=|E(H)|$ and $H\Delta H':=E(H)\Delta E(H')$.
Let $H\cup H'$ denote the subgraph of $G$ with vertex set $V(H)\cup V(H')$ and edge set $E(H)\cup E(H')$. Let $H\cap H'$ denote the subgraph of $G$ with edge set $E(H)\cap E(H')$ and without isolated vertex.
Let $N(H)$ be the set of vertices in $V(G)-V(H)$ that have a neighbour in $V(H)$. Set $N[H]:=N(H)\cup V(H)$. Let $A_i$ be a subgraph of $G$ or a subset of $V(G)$ for any integer $1\leq i\leq2$ with $V(A_1)\cap V(A_2)=\emptyset$. If there is no edge of $G$ linking $A_1$ and $A_2$, we say that $A_1,A_2$ are {\em anti-complete}. We say a path $P$ is an {\em $(A_1,A_2)$-path} if $V(A_1\cup A_2)\cap V(P^*)=\emptyset$ and 
 one end of $P$ is in $V(A_1)$ and the other end is in $V(A_2)$.
For any subgraph $H$ of $G$ containing $V(A_1\cup A_2)$, let $d_H(A_1,A_2)$ denote the minimum length of all $(A_1,A_2)$-paths in $H$.
\begin{lemma}\label{induced tree}
Let $S$ be a set of degree-$1$ vertices of a connected graph $G$. If $g(G)>|S|$, then there is an induced tree of $G$ containing $S$.
\end{lemma}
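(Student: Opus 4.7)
The plan is to take $T$ to be a subtree of $G$ with $V(T)\supseteq S$ chosen to minimize $|V(T)|$ (such a $T$ exists since $G$ is connected) and to show this $T$ is automatically induced. Minimality forces every leaf of $T$ to lie in $S$, for otherwise we could delete the extraneous leaf to shrink $T$; conversely, each $s\in S$ has $\deg_G(s)=1$ and hence is a leaf of $T$ whenever $|V(T)|\geq 2$, with the cases $|S|\leq 1$ being handled trivially by a single vertex. Thus $T$ has exactly $|S|$ leaves, namely the vertices of $S$.

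Suppose for contradiction that $T$ is not induced, and pick $uv\in E(G)\setminus E(T)$ with $u,v\in V(T)$. Let $P$ be the $(u,v)$-path in $T$, with vertices $u=w_0,w_1,\dots,w_k=v$. Then $P+uv$ is a cycle of $G$, so $k+1\geq g(G)>|S|$, giving $k\geq |S|$ and hence at least $|S|-1$ internal vertices on $P$. I claim some internal $w_i$ has $\deg_T(w_i)=2$. Deleting $E(P)$ from $T$ yields subtrees $T_0,\dots,T_k$ with $w_i\in V(T_i)$; an internal $w_i$ with $\deg_T(w_i)\geq 3$ has an edge of $T$ off $P$, so $|V(T_i)|\geq 2$ and $T_i$ contains a leaf of $T$ distinct from $w_i$, while $u$ and $v$ each contribute at least one leaf of $T$ (themselves if $\deg_T=1$, or a leaf in the side subtree $T_0$, $T_k$ otherwise). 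These leaves lie in pairwise disjoint $T_i$, so the total is at most $|S|$; therefore at most $|S|-2$ internal $w_i$ have $T$-degree $\geq 3$, and consequently at least one internal $w$ of $P$ satisfies $\deg_T(w)=2$.

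Having located such a $w$, form $T':=(T+uv)-e$, where $e$ is one of the two $T$-edges at $w$. Since $e$ lies on the unique cycle of $T+uv$, $T'$ is still a tree, on the same vertex set as $T$, and still contains $S$. In $T'$ the vertex $w$ has degree $1$, but $w$ is internal to $P$ so $\deg_G(w)\geq 2$, which rules out $w\in S$. Deleting $w$ from $T'$ produces a tree containing $S$ on $|V(T)|-1$ vertices, contradicting the choice of $T$. The only genuinely delicate step is the leaf-counting in the middle paragraph — checking that the contributions from $u$, $v$, and each degree-$\geq 3$ internal $w_i$ are genuinely distinct leaves of $T$; the rest is mechanical once the minimum tree has been fixed.
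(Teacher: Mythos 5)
Your proof is correct, but it runs in the opposite direction from the paper's. The paper takes a minimal \emph{induced connected} subgraph $T\supseteq S$ and shows it must be acyclic: if $T$ contained a cycle $C$, then $V(C)\cap S=\emptyset$ and $|C|\ge g(G)>|S|$, so since each $s\in S$ can ``block'' at most one vertex of $C$ (the vertex through which it attaches), some $x\in V(C)$ can be deleted while keeping a connected subgraph containing $S$, contradicting minimality. You instead take a minimal \emph{subtree} $T\supseteq S$ and show it must be induced, via the leaf count (exactly $|S|$ leaves, all in $S$), the observation that a chord $uv$ forces a tree path with at least $|S|-1$ internal vertices of which at most $|S|-2$ can have $T$-degree $\ge 3$, and the edge swap $(T+uv)-e$ that turns the resulting degree-$2$ vertex $w$ into a deletable leaf. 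The two arguments are dual uses of the same counting idea ($g(G)>|S|$ versus the $|S|$ elements of $S$, each accounting for one vertex of the relevant cycle), and both are sound; the paper's is shorter because removing a cycle vertex directly shrinks the object being minimized, whereas your route needs the extra swap step, but in exchange you verify explicitly the counting that the paper leaves implicit, and you obtain the mildly stronger structural fact that every minimum Steiner tree for $S$ is automatically induced under the girth hypothesis.
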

\begin{proof}
Let $T$ be an induced connected subgraph of $G$ containing $S$ with $|V(T)|$ minimal. Assume that $T$ is not a tree. Then $T$ contains a cycle $C$. Since $S$ is a set of degree-$1$ vertices of $G$, we have $V(C)\cap S=\emptyset$. Since $g(G)>|S|$ implies $|C|>|S|$, there is a vertex $x\in V(C)$ such that $T\del x$ contains a connected subgraph containing $S$, a contradiction to the choice of $T$. So $T$ is a tree.
\end{proof}


A {\em theta graph} is a graph that consists of a pair of distinct vertices joined by three internally disjoint paths.
\begin{lemma}\label{easy case}
Let $\ell\geq 2$ be an integer and $H$ be an induced theta subgraph of a graph  $G\in \mathcal{H}_{\ell}$. \begin{itemize}
\item[(1)] When all cycles in $H$ are even, either all ears of $H$ have length $\ell$ or exactly one ear of $H$ has length one.
\item[(2)] When some cycle in $H$ is odd, the length of the ear of $H$ shared by its odd cycles is one or larger than $\text{max} \{\ell, |P|, |P'|\}$, where $P,P'$ are the ears of $H$ shared by cycles having different parity.
\end{itemize}
\end{lemma}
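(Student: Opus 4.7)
The plan is to name the three internally disjoint paths of $H$ as $P_1,P_2,P_3$ and study the three cycles $C_{ij}:=P_i\cup P_j$ they determine. A preliminary observation underpins everything: each $C_{ij}$ is an induced cycle of $G$ unless the remaining ear $P_k$ has length one. Indeed, since $H$ is induced, any chord of $C_{ij}$ in $G$ is an edge of $E(H)\setminus E(C_{ij})=E(P_k)$ whose two ends lie in $V(C_{ij})$; the only such candidate edge joins the two branch vertices of $H$, forcing $|P_k|=1$. As $G$ is simple, at most one of the three ears can have length one.

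For part (1), when every $C_{ij}$ is even: if no ear has length one, then each $C_{ij}$ is an induced even cycle, hence an even hole, so $2\ell\le|C_{ij}|\le 2\ell$ by the girth and by the absence of longer even holes. Thus $|P_i|+|P_j|=2\ell$ for all pairs, and solving this system gives $|P_1|=|P_2|=|P_3|=\ell$. Otherwise some ear has length one, and by the observation above it is unique, completing part (1).

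For part (2), the identity $|C_{12}|+|C_{13}|+|C_{23}|=2(|P_1|+|P_2|+|P_3|)$ forces an even number of odd cycles among the three, hence exactly two. Say $C_{12}$ and $C_{13}$ are odd and $C_{23}$ is even; then $P_1$ is the ear shared by the two odd cycles and $\{P,P'\}=\{P_2,P_3\}$. If $|P_1|=1$ we are done, so assume $|P_1|\ge 2$. This makes $C_{23}$ induced, hence an even hole of length exactly $2\ell$, and so $|P_2|+|P_3|=2\ell$. Two sub-cases arise. If both $|P_2|,|P_3|\ge 2$, then $C_{12}$ and $C_{13}$ are induced odd holes, each of length at least $2\ell+1$ (girth plus odd parity); the inequalities $|P_1|+|P_2|\ge 2\ell+1$ and $|P_1|+|P_3|\ge 2\ell+1$, together with $|P_2|+|P_3|=2\ell$, immediately yield $|P_1|>|P_2|$, $|P_1|>|P_3|$, and (by adding) $|P_1|\ge\ell+1>\ell$. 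If instead, say, $|P_2|=1$, then $|P_3|=2\ell-1$ and $C_{12}$ stays induced, so $|P_1|+1\ge 2\ell+1$ forces $|P_1|\ge 2\ell$, which clearly exceeds $\ell$, $|P_2|=1$ and $|P_3|=2\ell-1$ for any $\ell\ge 2$.

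The crux is the chord observation in the first paragraph; after that everything reduces to parity bookkeeping together with the girth bound and the prohibition on long even holes, so I do not anticipate a serious obstacle.
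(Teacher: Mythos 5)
Your proposal is correct and follows essentially the same route as the paper: the observation that each cycle of $H$ is induced unless the third ear has length one, the girth bound $2\ell$ combined with the prohibition on longer even holes to pin even holes at length exactly $2\ell$, and the parity/length bookkeeping for the odd cycles. Your treatment is somewhat more explicit (the chord argument and the sub-case where one of $P,P'$ has length one), but no new idea is involved.
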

\begin{proof}
First we consider (1). Since $g(G)=2\ell\geq4$, at most one ear of $H$ has length one. When no ear of $H$ has length one, all cycles in $H$ are even holes with length $2\ell$ as $H$ is induced, so all ears of $H$ have length $\ell$. This proves (1).

Now we consider (2). Since some cycle in $H$ is odd, there are exactly two cycles of $H$ are odd. Assume that the ear of $H$, say $Q$, shared by its odd cycles has length at least two. Then the unique even cycle of $H$ is an even hole with length $2\ell$. Since $g(G)=2\ell$ and $|P|+ |P'|=2\ell$, we have $|Q|>\text{max} \{|P|, |P'|,\ell\}$, where $P,P'$ are the ears of $H$ not equal to $Q$. This proves (2). 
\end{proof}

\begin{lemma}\label{neighbour in C}
Let $\ell\geq 3$ be an integer and $C$ be an even hole of a graph $G\in \mathcal{H}_{\ell}$. Then every vertex not in $C$ has at most one neighbour in $C$.
\end{lemma}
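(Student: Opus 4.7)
The plan is a short direct girth argument by contradiction. First I would pin down the length of $C$: since $C$ is an even hole of a graph in $\mathcal{H}_{\ell}$, it is in particular a cycle, so $|C|\geq g(G)=2\ell$, while by the defining property of $\mathcal{H}_{\ell}$ there is no even hole of length greater than $2\ell$. Hence $|C|=2\ell$ exactly.

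Next, suppose for contradiction that some vertex $v\notin V(C)$ has two distinct neighbours $u_1,u_2\in V(C)$. The vertices $u_1,u_2$ split $C$ into two internally disjoint arcs whose lengths sum to $2\ell$; let $P$ be a shorter of the two, so $|P|\leq\ell$. Because $v\notin V(C)$, the path $P$ together with the two edges $u_1v$ and $vu_2$ forms a cycle $C'$ with $|C'|=|P|+2\leq\ell+2$. On the other hand, $C'$ is a cycle in $G$, so $|C'|\geq g(G)=2\ell$, which forces $\ell+2\geq 2\ell$, i.e.\ $\ell\leq 2$, contradicting the hypothesis $\ell\geq 3$.

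There is essentially no obstacle here: the whole argument is a single girth comparison and does not need to invoke the theta-graph structure of Lemma \ref{easy case}. The key observation is simply that a vertex outside $C$ with two neighbours on $C$ would, via the shorter $C$-arc together with the two edges to $v$, create a cycle of length at most $\ell+2$, which is strictly below the girth $2\ell$ as soon as $\ell\geq 3$.
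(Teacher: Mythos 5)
Your proof is correct and follows essentially the same route as the paper's: take the shorter arc of $C$ between the two neighbours (length at most $\ell$, since $|C|=2\ell$) and close it up through $v$ to get a cycle of length at most $\ell+2<2\ell=g(G)$. The only difference is that you make the step $|C|=2\ell$ explicit, which the paper leaves implicit.
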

\begin{proof}
Assume to the contrary that a vertex $v\notin V(C)$ has two neighbours $s,t\in V(C)$. Let $Q$ be a $(s,t)$-path on $C$ with length at most $\ell$. Then $Qsvt$ is a cycle of length at most $\ell+2<2\ell$ as $\ell\geq3$, a contradiction.
\end{proof}

Let $C$ be a hole of a graph $G$ and $s,t\in V(C)$ nonadjacent. Let $P$ be an induced $(s,t)$-path. If $V(C)\cap V(P^*)=\emptyset$, we call $P$ a {\em jump} or an {\em $(s,t)$-jump} over $C$. Let $Q_1,Q_2$ be the internally disjoint $(s,t)$-paths of $C$.
If $Q^*_1$ is not anti-complete to $P^*$ while $Q^*_2$ is anti-complete to $P^*$, we say that $P$ is a {\em local jump over $C$ across $Q^*_1$}. When there is no need to strengthen $Q^*_1$, we will also say that $P$ is a {\em local jump over $C$}. In particular, when $|V(Q^*_1)|=1$, we say that $P$ is a {\em local jump over $C$ across one vertex}. When $Q^*_1\cup Q^*_2$ is anti-complete to $P^*$, we say that $P$ is a {\em short jump over $C$}. 

\begin{lemma}\label{find short jump}
Let $\ell\geq3$ be an integer and  $C$ be an even hole of a graph $G\in \mathcal{H}_{\ell}$. For any pair of non-adjacent vertices $s,t\in V(C)$, if $P$ is an $(s,t)$-path with $|V(P^*)\cap N(C)|\leq2$ and $V(P^*)\cap V(C)=\emptyset$, then $G[V(P)]$ contains a short $(s,t)$-jump.
\end{lemma}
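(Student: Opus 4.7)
The plan is to exploit the fact that the hypothesis $|V(P^*)\cap N(C)|\leq 2$ is essentially tight: the two ``obvious'' members of $V(P^*)\cap N(C)$ already consume the whole budget. Let $s'$ be the $P$-neighbour of $s$ and $t'$ the $P$-neighbour of $t$. Since $s,t\in V(C)$ and $s',t'\in V(P^*)$, both $s'$ and $t'$ automatically lie in $V(P^*)\cap N(C)$. If $s'=t'$, then $P$ has a single internal vertex with two $C$-neighbours $s$ and $t$, contradicting Lemma \ref{neighbour in C} (which is where the assumption $\ell\geq 3$ enters). Hence $s'\neq t'$, and the hypothesis forces $V(P^*)\cap N(C)=\{s',t'\}$ exactly.

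Applying Lemma \ref{neighbour in C} once more, $s'$ has at most one $C$-neighbour and $s$ is one, so that neighbour must be $s$ itself; likewise, the unique $C$-neighbour of $t'$ is $t$. Every other internal vertex of $P$ lies outside $N(C)$ and therefore has no neighbour on $C$ at all. Combining these facts, no vertex of $V(P^*)$ is adjacent to any vertex of $V(C)\setminus\{s,t\}$; that is, $V(P^*)$ is anti-complete to $V(C)\setminus\{s,t\}$.

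To exhibit the short jump, take $P'$ to be a shortest $(s,t)$-path in $G[V(P)]$; such a path exists because $P$ itself is an $(s,t)$-path with vertex set in $V(P)$. A shortest path is automatically induced, and since $V(P')\subseteq V(P)$ it is induced in $G$ as well. Its internal vertices lie in $V(P^*)$, so they inherit both disjointness from $V(C)$ and anti-completeness to $V(C)\setminus\{s,t\}$. Thus $P'$ is a short $(s,t)$-jump contained in $G[V(P)]$, as required.

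The only non-routine step is the first one: noticing that the two forced members $s',t'$ of $V(P^*)\cap N(C)$ already saturate the budget in the hypothesis, which then pins down the $C$-neighbour of every internal vertex of $P$ via Lemma \ref{neighbour in C}. Once this observation is in place, the rest of the argument is essentially bookkeeping.
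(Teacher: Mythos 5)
Your proof is correct and follows essentially the same route as the paper's: take a shortest $(s,t)$-path $P'$ in $G[V(P)]$, and use the bound $|V(P^*)\cap N(C)|\leq 2$ together with Lemma \ref{neighbour in C} to conclude that the internal vertices of $P'$ are anti-complete to $V(C)\setminus\{s,t\}$. Your write-up simply makes explicit the bookkeeping (that $s'$ and $t'$ exhaust the budget and each sees only its own end of $P$) that the paper leaves to the reader.
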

\begin{proof}
Let $P'$ be a shortest $(s,t)$-path of $G[V(P)]$. 
Since $|V(P^*)\cap N(C)|\leq2$ and no vertex outside $C$ has two neighbours in $C$ by Lemma \ref{neighbour in C}, $P'$ is a short $(s,t)$-jump.
\end{proof}

\begin{lemma}\label{xyz}
Let $C$ be a hole of a graph $G$ such that no vertex outside $C$ has two neighbours in $C$. 
Assume that  $G$ has neither a degree-$2$ vertex nor a $K_1$- or $K_2$-cut. Then for any $3$-vertex path $xyz$ on $C$, one of the following holds.
\begin{itemize}
\item[(1)] Either $x$ or $z$ is one end of a short  jump over $C$.
\item[(2)] There is a local $(x,z)$-jump over $C$ across $y$.
\item[(3)] The vertex $y$ is an end of a short jump or a local jump over $C$ across one vertex.
\end{itemize}
\end{lemma}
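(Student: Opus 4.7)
The plan is to start at $y$, push into $V(G)\setminus V(C)$ along a carefully chosen shortest path, and classify the outcome by its endpoint on $C$.

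Since $y$ has degree at least $3$, there is some $w\in N(y)\setminus V(C)$; since $G$ has no $K_1$-cut, $G-y$ is connected. Choose $w$ to minimise $d_{G-y}(w,V(C)\setminus\{y\})$, let $R$ be a shortest such path ending at $u$, and set $P':=yR$. By the minimality of $w$, no interior vertex of $R$ is adjacent to $y$; by the shortness of $R$, no interior vertex has a neighbour in $V(C)\setminus\{y\}$; and by hypothesis on $C$, $w$'s only $C$-neighbour is $y$. Hence $P'$ is an induced $(y,u)$-path whose internal vertices have $C$-neighbours contained in $\{y\}$. If $u\notin\{x,z\}$ then $P'$ is a short $(y,u)$-jump and (3) holds, so by symmetry we may assume $u=x$.

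Since $xy$ is an edge and $G$ has no $K_2$-cut, $G-\{x,y\}$ is connected; repeat the construction there, choosing $w_2\in N(y)\setminus V(C)$ minimising $d_{G-\{x,y\}}(w_2,V(C)\setminus\{x,y\})$, letting $R_2$ be a shortest such path ending at $u_2\in V(C)\setminus\{x,y\}$, and setting $P_2:=yR_2$. The analogous analysis shows $P_2$ is induced, and its internal vertices have $C$-neighbours contained in $\{x,y\}$, with only $w_2$ adjacent to $y$ while adjacency to $x$ can occur at any interior vertex. Let $x'$ denote the $C$-neighbour of $x$ distinct from $y$, and split on $u_2$: (a) if $u_2=x'$ then $P_2$ is itself a $(y,x')$-jump, either short or local across the single vertex $\{x\}$, so (3) holds; (b) if $u_2\notin\{x',z\}$, then either no interior vertex of $P_2$ is $x$-adjacent and $P_2$ is a short $(y,u_2)$-jump (case (3)), or truncating $P_2$ at its last interior $x$-neighbour produces a short $(x,u_2)$-jump (case (1)); (c) if $u_2=z$ with some interior $x$-neighbour, the same truncation yields a short $(x,z)$-jump (case (1)).

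The remaining case is $u_2=z$ with no interior $x$-neighbour of $P_2$: then $P'$ runs from $y$ to $x$ and $P_2$ from $y$ to $z$, each with internal $C$-neighbours confined to its initial $y$-neighbour. My plan is to splice the outside portions $R$ and $R_2$. When $w=w_2$ the splice is immediate: the concatenation $x\,r_{k-1}\cdots r_1\,w\,r'_1\cdots r'_{k''-1}\,z$ is either an induced $(x,z)$-path whose interior $C$-neighbours lie in $\{y\}$ (a local $(x,z)$-jump across $\{y\}$, giving (2)), or contains a chord $r_ir'_j$ whose corresponding shortcut is a shorter induced $(x,z)$-path avoiding $w$ and hence a short $(x,z)$-jump (case (1)). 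When $w\neq w_2$, one needs a $(w,w_2)$-path inside $G-V(C)$ whose internal vertices avoid $N(C)\setminus\{w,w_2\}$, after which Lemma \ref{find short jump} applied to the resulting $(x,z)$-path yields either (1) or (2). The main technical obstacle is producing this $(w,w_2)$-connection with the required $N(C)$-restriction: it relies on the no-$K_1$- and no-$K_2$-cut hypotheses for underlying connectivity, the uniqueness of $C$-neighbours of outside vertices, and (potentially iteratively) Lemma \ref{find short jump} applied to shorter candidates to eliminate extraneous $N(C)$-vertices from the connecting path.
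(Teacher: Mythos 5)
Your probe-from-$y$ strategy is genuinely different from the paper's argument and works in every branch except the last one, but that last branch contains a real, unresolved gap. The paper splits instead on whether $\{x,z\}$ separates $y$ from $C\setminus\{x,y,z\}$. If it does, the component $D$ of $G\setminus\{x,z\}$ containing $y$ meets $C$ only in $y$, and the absence of $K_2$-cuts forces both $x$ and $z$ to have a neighbour in each component of $D\setminus y$; a shortest $(x,z)$-path through one such component is then automatically a short $(x,z)$-jump or a local one across $y$, giving (1) or (2). If it does not, a single shortest $(y,C\setminus\{x,y,z\})$-path in $G\setminus\{x,z\}$ is analysed according to which of $x,z$ send edges into its interior, giving (1) or (3). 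Your first probe (aimed at $V(C)\setminus\{y\}$) and second probe (aimed at $V(C)\setminus\{x,y\}$) are allowed to terminate at $x$ and $z$ respectively, and that is precisely why you are left needing to join the two gateway vertices $w$ and $w_2$.

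The gap is in the case $u=x$, $u_2=z$, no interior $x$-neighbour on $P_2$, and $w\neq w_2$: you assert that one needs a $(w,w_2)$-path inside $G-V(C)$ whose internal vertices avoid $N(C)\setminus\{w,w_2\}$, and you do not produce it. Nothing in your setup guarantees such a path exists --- every $(w,w_2)$-path in $G-V(C)$ could pass through many vertices with neighbours elsewhere on $C$, and Lemma \ref{find short jump} only applies when the candidate path meets $N(C)$ in at most two internal vertices, so the ``iterative'' cleanup you gesture at has no termination argument. The repair is the paper's dichotomy: if $\{x,z\}$ separates $y$ from $C\setminus\{x,y,z\}$, then $w$, $w_2$, $R^*$ and $R_2^*$ all lie in the component $D$ of $y$ in $G\setminus\{x,z\}$ and you never need to connect them, since the no-$K_2$-cut argument inside $D\setminus y$ produces the desired $(x,z)$-jump directly; if it does not separate, aim the probe from $y$ at $C\setminus\{x,y,z\}$ (rather than at a target set containing $z$) and finish exactly as in your cases (a)--(c). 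Two smaller points: the splice in your sub-case $w=w_2$ should be phrased as taking a shortest $(x,z)$-path in $G[V(R)\cup V(R_2)]$ (a single chord shortcut need not yield an induced path); and Lemma \ref{find short jump} is stated for $G\in\mathcal{H}_{\ell}$, whereas Lemma \ref{xyz} assumes only that no vertex outside $C$ has two neighbours in $C$, so it cannot be invoked verbatim here.
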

\begin{proof}
First we consider the case that $y$ and $C\del\{x,y,z\}$ are in different components of $G\del\{x,z\}$. 
Let $D$ be the component of $G\del\{x,z\}$ containing $y$. Evidently, $D\del y$ is non-empty as $y$ has at least three neighbours. Since neither $\{x,y\}$ nor $\{y,z\}$ is a cut of $G$, both $x$ and $z$ have a neighbour in each component of $D\del y$. Hence, there is a shortest $(x,z)$-path $P$ with interior in $D\del y$. Since $P$ is a short $(x,z)$-jump or a local $(x,z)$-jump over $C$ across $y$, either (1) or (2) holds.

Secondly, consider the case that $y$ and $C\del\{x,y,z\}$ are in the same component of $G\del\{x,z\}$. 
Let $P$ be a shortest $(y,C\del\{x,y,z\})$-path in $G\del\{x,z\}$. Let $y'$ be the end of $P$ not equal to $y$. Since no vertex outside $C$ has two neighbours in $C$, no vertex in $V(C)-\{x,y,y',z\}$ has a neighbour in $P^*$. Assume that $x$ and $z$ have a neighbour in $P^*$. Then there are $x',z'\in V(P^*)$ with $xx',zz'\in E(G)$ and such that no vertex in $V(P^*(x',z'))$ has a neighbour in $V(C)$, so $xx'P(x',z')z'z$ is a short $(x,z)$-jump over $C$, implying that (1) holds. Hence, by symmetry we may assume that $z$ is anti-complete to $P^*$. We may further assume that $x$ has a neighbour in $P^*$, for otherwise $P$ is a short $(y,y')$-jump over $C$. 
When $xy'\in E(G)$, the path $P$ is a local $(y,y')$-jump across $x$, so (3) holds. When $xy'\notin E(G)$, the path $P$ contains a short $(x,y')$-jump over $C$, so (1) holds.
\end{proof}

When $C$ is an even hole of a graph $G\in \mathcal{H}_{\ell}$ with $\ell\geq3$, since no vertex outside $C$ has two neighbours in $C$ by Lemma \ref{neighbour in C}, each $3$-vertex path $xyz$ on $C$ satisfies Lemma \ref{xyz}. This fact will be frequently and only used in the last proof of this paper. 
Hence, in Section 4, we only consider short jumps or local jumps over $C$ across one vertex over an even hole $C$ of a graph $G\in \mathcal{H}_{\ell}$. 

Let $P$ be a short $(s,t)$-jump over $C$, and $Q_1,Q_2$ be the $(s,t)$-paths of $C$. Note that $C\cup P$ is an induced theta subgraph of $G$ whose ears all have length at least 2, and both $PQ_1$ and $PQ_2$ have the same parity as $C$ is even. 
When $PQ_1$ is odd, 
we say that $P$ is of {\em type-o}; and when $PQ_1$ is even, we say that $P$ is of {\em type-e}. By Lemma \ref{easy case} (2), we have $|P|>\ell$ when $P$ is  of type-o. When $P$ is of type-e, it follows from Lemma \ref{easy case} (1) that $|P|=|Q_1|=|Q_2|=\ell$.  Hence, we have.
\begin{lemma}\label{jump-length}
Let $\ell\geq2$ be an integer and  $C$ be an even hole of a graph in $\mathcal{H}_{\ell}$. Then all short jumps $P$ over $C$ have length at least $\ell$, and $|P|=\ell$ if and only if $P$ is of type-e. Moreover, when $|P|=\ell$, the two paths on $C$ with the same ends as $P$ have length $\ell$.
\end{lemma}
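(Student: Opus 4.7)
The plan is to reduce the statement to Lemma~\ref{easy case} by verifying that $C\cup P$ is an induced theta graph and then splitting on the parity of the two cycles $PQ_1, PQ_2$. First I would check the hypotheses of Lemma~\ref{easy case}: since $P$ is a short $(s,t)$-jump, $P^*$ is anti-complete to $V(C)$; since $P$ is a jump it is an induced path; since $s,t\in V(C)$ are non-adjacent (by the definition of a jump), each ear of the theta graph $H:=C\cup P$ has length at least $2$. In particular $H$ is indeed an induced theta subgraph of $G$. Furthermore, since $|C|=|Q_1|+|Q_2|$ is even, $|PQ_1|=|P|+|Q_1|$ and $|PQ_2|=|P|+|Q_2|$ have the same parity, so either both are even or both are odd.

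Next I would handle the type-e case. Here $C$, $PQ_1$, $PQ_2$ are all even cycles of $H$, so Lemma~\ref{easy case}(1) applies. Since all three ears have length at least $2$, no ear of $H$ has length $1$, so the first alternative of Lemma~\ref{easy case}(1) gives $|P|=|Q_1|=|Q_2|=\ell$. This simultaneously yields $|P|=\ell$ and that the two $(s,t)$-paths on $C$ have length $\ell$.

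Then I would handle the type-o case. Here $PQ_1$ and $PQ_2$ are the two odd cycles of $H$ and $C$ is its unique even cycle, so $P$ is the ear shared by the two odd cycles while $Q_1,Q_2$ are the ears shared by cycles of different parity. By Lemma~\ref{easy case}(2), either $|P|=1$ or $|P|>\max\{\ell,|Q_1|,|Q_2|\}$. The former is impossible because $s,t$ are non-adjacent forces $|P|\geq2$, so $|P|>\ell$.

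Combining the two cases gives $|P|\geq\ell$ for every short jump, with equality exactly in the type-e case, and in that case the two $(s,t)$-paths on $C$ have length $\ell$. There is no real obstacle here: the content has essentially been accumulated in the paragraph preceding the statement, and the only care needed is to confirm that the induced theta hypothesis holds (so that Lemma~\ref{easy case} is applicable) and that the length-$1$ ear alternative in Lemma~\ref{easy case}(2) is ruled out by the non-adjacency of $s$ and $t$.
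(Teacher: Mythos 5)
Your proposal is correct and follows essentially the same route as the paper, which establishes this lemma in the paragraph immediately preceding its statement: observe that $C\cup P$ is an induced theta subgraph with all ears of length at least $2$, split on the common parity of $PQ_1$ and $PQ_2$, and apply Lemma~\ref{easy case}(1) in the type-e case and Lemma~\ref{easy case}(2) in the type-o case. (Only a cosmetic quibble: $P^*$ is anti-complete to $V(C)\setminus\{s,t\}$ rather than to all of $V(C)$, but this is exactly what is needed for $C\cup P$ to be induced.)
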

\noindent Lemma \ref{jump-length} will be frequently used in the rest of this paper without reference sometimes.

\section{Induced subgraphs isomorphic to subdivisions of $K_4$}
Let $C$ be a cycle and suppose that $x_1,x_2,\ldots,x_n\in V(C)$ occur on $C$ in this cyclic order with $n\geq3$. For any two distinct $x_i$ and $x_j$, the cycle $C$ contains two $(x_i,x_j)$-paths. Let $C(x_i,x_{i+1},\ldots, x_j)$ denote the $(x_i,x_j)$-path in $C$ containing $x_i,x_{i+1},\ldots, x_j$ (and not containing $x_{j+1}$ if $i \neq j+1$), where subscripts are modulo $n$. Such path is uniquely determined as $n \geq 3$.

\begin{figure}[htbp]
\begin{center}
\includegraphics[height=6cm,page=1]{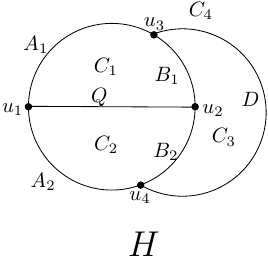}
\caption{$u_1,u_2,u_3,u_4$ are the degree-3 vertices of $H$. Let $C_1,C_2,C_3,C_4$ be the face cycles of $H$. Let $a_1, a_2, b_1, b_2,d,q$ denote the lengths of ears $A_1, A_2, B_1, B_2,D,Q$, respectively. Set $a:=a_1+ a_2$ and $b:=b_1+ b_2$. }
\label{k4}
\end{center}
\end{figure}
Let $H$ be an induced subgraph of a graph $G$ that is isomorphic to a subdivision of $K_4$. When all face cycles of $H$ are odd, we say that $H$ is an {\em odd $K_4$-subdivision}; and when all face cycles of $H$ are even, we say that $H$ is an {\em even $K_4$-subdivision}; and when exactly two face cycles of $H$ are even, 
we say that $H$ is a {\em balanced $K_4$-subdivision}.

\begin{lemma}\label{odd K4}
Let $\ell\geq2$ be an integer and $G$ be a graph in $\mathcal{H}_{\ell}$. Let $H$ be an odd $K_4$-subdivision of $G$ pictured as the graph in Figure \ref{k4}. Then there are two ears of $H$ sharing an end having length one.
\end{lemma}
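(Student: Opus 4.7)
The plan is a proof by contradiction. Suppose no two ears of $H$ sharing an end have length one; then the length-one ears of $H$ correspond to a matching in the underlying $K_4$, so there are at most two of them, and if there are two they are non-adjacent. I would label the ears so that $\{A_1,A_2\}$, $\{B_1,B_2\}$, $\{D,Q\}$ are the three perfect matchings of the underlying $K_4$, as the figure suggests.

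First I would do a parity check. Adding the four relations ``$|C_i|$ odd'' in $\mathbb{F}_2$ forces the three quadrilateral cycles of $H$---the $4$-cycles of the underlying $K_4$, of lengths $a+b$, $a+d+q$, and $b+d+q$---to all have even length. Next I would observe that each face cycle $C_i$ is automatically an induced cycle of $H$: the three ears not lying on $C_i$ all emanate from the fourth degree-$3$ vertex of $H$, so none of their edges can be a chord of $C_i$. Hence each $C_i$ is a hole of $G$, and being odd, $|C_i|\geq 2\ell+1$ by the girth hypothesis. A quadrilateral of $H$ is a hole of $G$ precisely when neither ear of the complementary matching has length one (otherwise that ear would be a chord); when it is a hole, it is even and so has length exactly $2\ell$ because $G\in\mathcal{H}_\ell$.

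Then I would split into three cases by the number $k\in\{0,1,2\}$ of length-one ears. If $k=0$, all three quadrilaterals are even holes, so $a+b=a+d+q=b+d+q=2\ell$, forcing $a=b=\ell$ and $d+q=\ell$; but then, since each ear appears in exactly two face cycles, $\sum_i|C_i|=2(a+b+d+q)=6\ell$, contradicting $\sum_i|C_i|\geq 4(2\ell+1)=8\ell+4$. If $k=1$, say $|A_1|=1$, the two quadrilaterals not involving the matching $\{A_1,A_2\}$ are still holes, giving $a+b=a+d+q=2\ell$, hence $d+q=b$ and $b=2\ell-1-a_2$; substituting into $\sum_i|C_i|=2(a+b+d+q)$ and comparing with the lower bound $8\ell+4$ yields a negative value of $a_2$, a contradiction. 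If $k=2$, say $|A_1|=|A_2|=1$, the two surviving quadrilaterals force $b=d+q=2\ell-2$, and pairing the odd-hole bounds as $|C_1|+|C_3|\geq 4\ell+2$ and $|C_2|+|C_4|\geq 4\ell+2$ forces $b_1,b_2\geq\ell+1$, contradicting $b_1+b_2=2\ell-2$.

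The main obstacle is really only bookkeeping: keeping straight which quadrilateral cycles survive as induced cycles under each pattern of length-one ears, and tracking how the parity constraints together with the $\mathcal{H}_\ell$-forced equality ``even hole $=2\ell$'' propagate into the sum of face-cycle lengths. No deeper structural machinery is needed beyond the definition of $\mathcal{H}_\ell$ and the fact that each face cycle of a $K_4$-subdivision is automatically chord-free in the subdivision itself.
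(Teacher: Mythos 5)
Your proof is correct, but it takes a genuinely different route from the paper's. The paper argues locally: after normalizing by symmetry ($q\geq\max\{a_1,b_1\}$ and $a_2>1$), it applies Lemma~\ref{easy case}~(2) to the induced theta subgraph $C_1\cup C_3$ to force $b_1=1$, and then exhibits a single face cycle ($C_4$, via $|C_4|=a_1+a_2+d\leq q+a_2+d=2\ell-1$) that violates the girth. You instead argue globally: the parity of the four odd face cycles makes every quadrilateral cycle even, each chordless quadrilateral is then an even hole of length exactly $2\ell$, and summing the four face-cycle lengths, $\sum_i|C_i|=2(a+b+d+q)\geq 4(2\ell+1)$, against these equalities eliminates each of the three possible patterns (zero, one, or two nonadjacent length-one ears). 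All of your steps check out: the face cycles and the relevant quadrilaterals are chord-free for exactly the reasons you give, and the arithmetic in each case is right. Your version is self-contained (it never invokes the theta-graph lemma) and is closer in spirit to the counting step the paper itself uses in the proof of Lemma~\ref{bal K4}~(1); the paper's version is shorter but depends on a careful choice of which ear to normalize. One cosmetic caveat: in the paper's Figure~\ref{k4} the pairs $\{A_1,A_2\}$ and $\{B_1,B_2\}$ each share an end (only $\{D,Q\}$ is a matching), so your relabeling does not match the figure; since the lemma's conclusion is label-independent this is harmless, but your quadrilateral lengths $a+d+q$ and $b+d+q$ are tied to your own convention and would read $a_1+b_2+d+q$ and $a_2+b_1+d+q$ in the paper's.
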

\begin{proof}
Assume not. By symmetry we may assume that $q\geq\text{max} \{a_1,b_1\}$. By the symmetry between $a_2$ and $b_2$, we may further assume that $a_2>1$. Then $C_1\cup C_3$ is an induced theta-subgraph of $G$. Applying Lemma \ref{easy case} (2) on $C_1\cup C_3$, we have $b_1=1$. Then $a_1, b_2>1$, for otherwise the lemma holds. Since $C_2\Delta C_3$ is an even hole, $q+a_2+d = 2\ell-1$, so $|C_4|=a_1+a_2+d\leq2\ell-1$ as $q\geq a_1$, a contradiction to the fact $g(G)=2\ell$.
\end{proof}

\begin{lemma}\label{even K4}
Let $\ell\geq3$ be an integer and $G$ be a graph in $\mathcal{H}_{\ell}$. Then $G$ does not contain an even $K_4$-subdivision.
\end{lemma}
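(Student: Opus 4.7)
Proof proposal:

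The plan is to assume by contradiction that $G$ contains an even $K_4$-subdivision $H$, denote the four degree-$3$ vertices by $u_1, u_2, u_3, u_4$ and the length of the ear joining $u_i$ and $u_j$ by $e_{ij}$, and then force these six lengths into mutually incompatible constraints.

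First I will show that each face cycle of $H$ has length exactly $2\ell$. The key observation is that a face cycle $C$ is an induced cycle of $G$: any possible chord would have to be an edge of $H$ (since $H$ is induced in $G$), and the only edges of $H$ among the vertices of $C$ are the edges of the three ears that constitute $C$. So each face cycle is an even hole of $G$, and its length is pinched between $g(G) = 2\ell$ and the maximum even-hole length $2\ell$. Writing the four equations $|C_i| = 2\ell$ as linear relations in the $e_{ij}$ and subtracting pairs of them, I will deduce that opposite ears have equal length: $e_{12} = e_{34}$, $e_{13} = e_{24}$, and $e_{14} = e_{23}$. Setting $a := e_{12}$, $b := e_{13}$, $c := e_{14}$, every face relation collapses to the single identity $a + b + c = 2\ell$.

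The core of the argument is then to apply Lemma \ref{easy case}(1) to induced theta subgraphs of $G$ sitting inside $H$. For each pair $\{u_i, u_j\}$, the three internally disjoint $(u_i, u_j)$-paths in $H$---the direct ear, together with the two two-ear paths through $u_k$ and through $u_l$---form a theta subgraph $T_{ij}$. I will verify that $T_{ij}$ is induced in $G$ whenever the opposite ear $u_k u_l$ has length at least $2$: the only way $H$ can create a chord of $T_{ij}$ is if that opposite ear is a single edge. All three cycles of $T_{ij}$ are even (two are face cycles of length $2\ell$, and the third has length $2(b+c)$, $2(a+c)$ or $2(a+b)$ depending on the chosen pair). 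Because the two via-paths of the theta each have length at least $2$, the only ear of $T_{ij}$ that could possibly equal $1$ is the direct one. So Lemma \ref{easy case}(1) applied when $a \geq 2$ forces all three theta-ears to equal $\ell$, giving $a = \ell$ and $b + c = \ell$; analogously, $b \geq 2 \Rightarrow b = \ell$ and $a + c = \ell$, and $c \geq 2 \Rightarrow c = \ell$ and $a + b = \ell$.

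Finally I will finish by a short case analysis, assuming WLOG $a \leq b \leq c$. If $a \geq 2$, all three implications fire at once, forcing $a = b = c = \ell$ and hence $3\ell = a + b + c = 2\ell$, i.e., $\ell = 0$, contradicting $\ell \geq 3$. If $a = 1$ but $b \geq 2$, the $b$-implication gives $b = \ell$ and $c = \ell - 1$, violating $b \leq c$. If $a = b = 1$, then $c = 2\ell - 2 \geq 4$, so the $c$-implication yields $c = \ell$ and hence $\ell = 2$, again a contradiction. The main obstacle I anticipate is the careful bookkeeping for inducedness of the theta subgraphs---in particular, confirming that the only potential chord of $T_{ij}$ comes from the opposite ear when it has length $1$. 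Once that verification is in hand, the remainder of the proof is essentially linear algebra followed by the three-line case split above.
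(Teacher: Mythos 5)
Your proof is correct and follows essentially the same route as the paper: both arguments pin every face cycle at length $2\ell$ and then apply Lemma \ref{easy case}(1) to the induced theta subgraphs obtained by deleting the interior of an ear of length at least $2$. Your symmetric bookkeeping (opposite ears have equal length) lets you close with pure arithmetic, whereas the paper's last step instead exhibits the over-long even hole $C_1\Delta C_3$; this is only a cosmetic difference.
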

\begin{proof}
Assume to the contrary that $H$ is an even $K_4$-subdivision of $G$ pictured as the graph in Figure \ref{k4}. Since each even hole of $G$ has length $2\ell$, by symmetry we may assume that $d>1$. When $q=1$, since $a+b=2(2\ell-1)$, by symmetry we may assume that $b\geq 2\ell-1$, so $|C_3|>2\ell$ as $d>1$, a contradiction to the fact $g(G)=2\ell$. Hence, $q>1$. Since $H\del V(Q^*)$ and $H\del V(D^*)$ are induced theta-subgraph of $G$ with all cycles are even, by Lemma \ref{easy case} (1), 
$q=a_1+b_1=a_2+b_2=\ell$ and 
$d=a_1+a_2=b_1+b_2=\ell$. So $a_1=b_2$ and $a_2=b_1$. Since $a_1+b_1=\ell\geq3$, by symmetry assume $a_2=b_1>1$. Then $C_1\Delta  C_3$ is an even hole of length larger than $2\ell$ as $q=d=\ell$, a contradiction.
\end{proof}

When $H$ an induced subgraph of a graph $G\in \mathcal{H}_{\ell}$ with $\ell\geq3$ that is isomorphic to a subdivision of $K_4$, if some face cycle of $H$ is even, then $H$ is balanced by Lemma \ref{even K4}. This fact will be frequently used in the rest of this paper without explanation sometimes.


\begin{lemma}\label{bal K4}
Let $\ell\geq2$ be an integer and $G$ be a graph in $\mathcal{H}_{\ell}$. Let $H$ be a balanced $K_4$-subdivision of $G$ pictured as the graph in Figure \ref{k4} such that $C_1, C_2$ are odd holes and $C_3, C_4$ are even holes. 
Then the following hold.
\begin{itemize}
\item[(1)] $q>1$.
\item[(2)] When $d>1$, we have that $q>\ell$, $a=b=d=\ell$. Moreover, when $\ell>3$,  one of the following holds.
\begin{itemize}
\item[(2.1)] The graph $G$ has either a degree-$2$ vertex or a $K_1$- or $K_2$-cut.
\item[(2.2)] Either $a_1=b_1=1$ or $a_2=b_2=1 $.
\end{itemize}
\end{itemize}
\end{lemma}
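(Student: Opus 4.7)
The plan centres on reading off the face-cycle lengths from the figure and combining the $\mathcal{H}_\ell$-constraints (girth $2\ell$ and no even hole of length greater than $2\ell$) with Lemma \ref{easy case}. In the figure $D$ is the ear shared by the two even faces $C_3, C_4$ and $Q$ is the ear shared by the two odd faces $C_1, C_2$, so
\[
|C_3|=d+a,\quad |C_4|=d+b,\quad |C_1|=q+a_1+b_1,\quad |C_2|=q+a_2+b_2.
\]
Because $C_3,C_4$ are even holes of $G\in\mathcal{H}_\ell$, each has length $2\ell$, whence $a=b=2\ell-d$; because $C_1,C_2$ are odd holes, each has length at least $2\ell+1$.

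For (1), suppose $q=1$. The odd-hole bounds then force $a_1+b_1\geq 2\ell$ and $a_2+b_2\geq 2\ell$, hence $a+b\geq 4\ell$. But $a+b=2(2\ell-d)\leq 4\ell-2$, a contradiction.

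For the first assertion of (2), delete $V(Q^*)$ from $H$; this is nontrivial by (1), and because $H$ is induced in $G$ the result is an induced theta subgraph of $G$ with branch vertices the ends of $D$ and ears $D$, $A_1\cup A_2$, $B_1\cup B_2$ of lengths $d,a,b$. Its three facial holes are $C_3$, $C_4$, and a new cycle $T$ of even length $a+b=4\ell-2d$. Girth gives $|T|\geq 2\ell$ while no-long-even-hole gives $|T|\leq 2\ell$, so $d=\ell$ and hence $a=b=\ell$. For $q>\ell$, simply add the two odd-hole bounds:
\[
|C_1|+|C_2|=2q+(a_1+a_2+b_1+b_2)=2q+2\ell\geq 2(2\ell+1),
\]
whence $q\geq\ell+1$.

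For the Moreover, assume $\ell>3$ and that (2.1) fails, so $\delta(G)\geq 3$ and $G$ has no $K_1$- or $K_2$-cut; the target is (2.2). Since $d=\ell\geq 4$, $D$ has interior vertices of degree $2$ in $H$ and hence of degree $\geq 3$ in $G$, so each has a neighbour in $V(G)\setminus V(H)$. Applying Lemma \ref{xyz} (legal by Lemma \ref{neighbour in C}) to $3$-vertex paths on the even hole $C_3$, each such interior vertex sits at the end of a short jump or a local jump across one vertex; by Lemma \ref{jump-length} a short jump has length $\geq\ell$, with equality only for type-e jumps whose endpoints split the hole into two $\ell$-paths. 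Each such jump combines with $H$ to form a new induced theta or balanced-$K_4$ substructure in $G$, to which Lemma \ref{easy case}, Lemma \ref{even K4}, Lemma \ref{odd K4}, and the already-proved parts of the current lemma apply. The main obstacle is precisely this Moreover: the expected outcome of the casework is that unless $a_1=b_1=1$ (collapsing $A_1, B_1$ at their common endpoint) or $a_2=b_2=1$ (collapsing $A_2, B_2$ at their common endpoint), some jump inevitably creates either an even hole of length greater than $2\ell$ or a cycle of length less than $2\ell$, contradicting $G\in\mathcal{H}_\ell$; carefully orchestrating this case analysis, with the aid of the already-established equalities $a=b=d=\ell$ and the strict bound $q>\ell$ to make the arithmetic tight, is where essentially all the technical work lives.
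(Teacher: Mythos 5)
Your arguments for (1) and for the quantitative part of (2) are correct and, if anything, slightly more direct than the paper's: for (1) you use only that $|C_1|,|C_2|\geq 2\ell+1$ together with $a+b=4\ell-2d$, where the paper instead sums the four odd cycles $C_1$, $C_2$, $C_1\Delta C_3$, $C_2\Delta C_3$; for $a=b=d=\ell$ you argue directly on the even hole of length $a+b$ in $H\setminus V(Q^*)$ (correctly chordless because $q,d>1$) where the paper cites Lemma \ref{easy case}(1); and your derivation of $q>\ell$ from $|C_1|+|C_2|=2q+2\ell\geq 2(2\ell+1)$ replaces the paper's appeal to Lemma \ref{easy case}(2). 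These are genuine but minor simplifications.

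The ``Moreover'' clause, however, is not proved: you explicitly defer ``essentially all the technical work'' to an unexecuted case analysis, and the sketch you give does not contain the ideas that make it work. Concretely, the paper's proof (i) first fixes $H$ with $|H|$ minimum among all type-$\ell$ balanced $K_4$-subdivisions --- a minimality assumption you never introduce, and without which its Claim \ref{1-neighbour} (every vertex outside $H$ and outside $N(u_3)\cup N(u_4)$ has at most one neighbour in $H$) fails; (ii) proves that $C_1\cup C_2$ and the middle of $D$ lie in different components of $G$ minus $N(u_3)\cup N(u_4)$ (Claim \ref{cut-3}), which is where the hole-length arithmetic actually lives; and (iii) uses this separation to show that a vertex $z$ at distance two from $u_3$ on $D$ either has degree $2$ or lies in a $K_2$-cut $\{s,z\}$ or $\{s',z\}$, unless an odd $K_4$-subdivision forces (2.2) via Lemma \ref{odd K4}. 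Your plan --- give each interior vertex of $D$ a jump over $C_3$ via Lemma \ref{xyz} and derive a forbidden hole unless (2.2) holds --- cannot succeed as stated, because in general there is no contradiction to be found: when the neighbours of $D^*$ lead into a region that reattaches to $H$ only near $u_3,u_4$, no illegal hole arises and the correct conclusion is the cut in (2.1), which requires a separation argument rather than cycle-length arithmetic. So the heart of the lemma, the part actually invoked in Lemmas \ref{prism} and \ref{parallel jumps}, is missing.
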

Let $H$ be a balanced $K_4$-subdivision of a graph $G\in \mathcal{H}_{\ell}$ with $\ell\geq2$. 
If the ear of $H$ shared by its even face cycles has length one, we say that $H$ is of {\em type-$1$}; otherwise we say it is of {\em type-$\ell$}. 
When $\ell>3$, by Lemma \ref{bal K4}, if $G$ has neither a degree-$2$ vertex nor a $K_1$- or $K_2$-cut, then either the ear of a balanced $K_4$-subdivision of $G$ shared by its even face holes has length one or two adjacent ears shared by its face holes of different parity have length one. This obvious corollary of Lemma \ref{bal K4} (2) will be frequently used in the proofs of Lemmas \ref{prism} and \ref{parallel jumps}.

\begin{proof}[Proof of Lemma \ref{bal K4}.]
First we show that (1) holds. One one hand, since $C_1, C_2, C_1\Delta C_3$, and $C_2\Delta C_3$ are odd cycles, $$|C_1|+| C_2|+| C_1\Delta C_3|+|C_2\Delta C_3|=2(a+b+2q+d)\geq4(2\ell +1).$$ On the other hand, since  $C_3, C_4$ are even holes, $a+b+2d=4\ell$. Therefore, $2q-d\geq 2$, implying $q>1$ as $d\geq1$. This proves (1).

Next we show that (2) holds. Since $C_1\cup C_2$ is an induced theta-subgraph of $G$ as $d>1$, by applying Lemma \ref{easy case} (2) to $C_1\cup C_2$, we have $q>\ell$ as $q>1$ by (1). By applying Lemma \ref{easy case} (1) to $H\del V(Q^*)$, we have $a=b=d=\ell$. 

To finish the proof of (2), in the rest of the proof, we may assume that $\ell\geq4$. Without loss of generality, we may further assume that $H$ is chosen with $|H|$ minimum over all balanced $K_4$-subdivisions of type-$\ell$.

\begin{claim}\label{1-neighbour}
Each vertex in $V(G)-V(H)-N_G(\{u_3,u_4\})$ has at most one neighbour in $H$.
\end{claim}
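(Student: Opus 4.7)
The plan is to assume for contradiction that some vertex $v \in V(G) \setminus V(H) \setminus N_G(\{u_3, u_4\})$ has at least two neighbours in $V(H)$; fix two such neighbours $s, t \in V(H) \setminus \{u_3, u_4\}$. Applying Lemma~\ref{neighbour in C} to the even face cycles $C_3 = A_1 \cup D \cup B_1$ and $C_4 = A_2 \cup D \cup B_2$, we may also assume $\{s, t\} \not\subseteq V(C_3)$ and $\{s, t\} \not\subseteq V(C_4)$. Combined with $s, t \notin \{u_3, u_4\}$, these restrictions force $\{s, t\}$ into a short list of configurations, modulo the two $\mathbb{Z}_2$-symmetries of $H$ fixing the pair $\{Q, D\}$ (swap $u_1 \leftrightarrow u_2$ with $A_i \leftrightarrow B_i$, and swap $u_3 \leftrightarrow u_4$ with $A_1 \leftrightarrow B_1$, $A_2 \leftrightarrow B_2$). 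Representative cases are: (i) $\{s,t\} = \{u_1, u_2\}$; (ii) $s = u_1$ and $t \in A_2^* \cup B_2^* \cup Q^*$; (iii) $s \in A_1^*$ and $t \in A_2^* \cup B_2^* \cup Q^*$; (iv) $\{s, t\} \subseteq Q^*$; and (v) $s \in D^*$ and $t \in Q^*$.

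In each case we exploit the length-$2$ chord $svt$ in one of four ways. For (ii) with $t \in A_2^*$, the cycle $u_1 v t \cup A_2(u_3, t) \cup A_1$ has length $2 + |A_2(u_3, t)| + a_1 \leq 2 + (a_2 - 1) + (\ell - 1) \leq 2\ell - 1 < 2\ell$, contradicting $g(G) = 2\ell$; symmetric estimates using $a_i, b_i \leq \ell - 1$ and $d = \ell$ dispose of the other sub-cases of (ii) and of (iii) in which $s, t$ are joined by a short $H$-path. For (i), replacing $Q$ by $u_1 v u_2$ produces a subdivision $H'$ with $|H'| = |H| - q + 2 < |H|$ (recall $q > \ell \geq 4$); since $|C_1| = q + \ell$ is odd, the new face cycles $C_1', C_2'$ are odd iff $\ell$ is odd, so $H'$ is either a balanced $K_4$-subdivision of type-$\ell$ (contradicting the minimality of $|H|$) or an even $K_4$-subdivision (contradicting Lemma~\ref{even K4}). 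Configuration (iv) uses the same rerouting applied to the sub-arc $Q(s,t)$: girth forces the $Q$-distance $\alpha$ to satisfy $\alpha \geq 2\ell - 2$, and the parity of $\alpha$ against $q + \ell$ odd again triggers either minimality or Lemma~\ref{even K4}. For (iii) and (v) where $s, t$ do not share a common face cycle, we form the four cycles $svt \cup P$ with $P$ ranging over the short $H$-paths from $s$ to $t$ through a single ear among $A_1, A_2, B_1, B_2$; the parities of these four cycles are linked by $q + \ell \equiv 1 \pmod{2}$, and at least one is an even cycle of length either strictly less than $2\ell$ or strictly greater than $2\ell$, in both cases contradicting our hypotheses on $G$.

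The main obstacle is twofold. First, the configuration $\{s, t\} \subseteq Q^*$ is the only one in which $s, t$ lie on no common even face cycle, so Lemma~\ref{neighbour in C} does not apply directly; only the combination of girth, $q > \ell$, and the parity $q + \ell \equiv 1 \pmod{2}$ closes out both parity cases for the $Q$-distance $\alpha$. Second, all rerouting arguments implicitly require that the rerouted subgraph $H'$ is induced in $G$, equivalently that $v$ has no further neighbour in $V(H')$. If $v$ has a third neighbour $w \in V(H) \setminus \{s, t, u_3, u_4\}$, then the length-$2$ path from $w$ to $s$ or $t$ through $v$, completed by a short $H$-path between the endpoints, already closes a cycle of length less than $2\ell$, so the multi-neighbour case reduces to the girth contradiction handled above. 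Once these points are settled, every configuration produces one of the four contradictions and the claim follows.
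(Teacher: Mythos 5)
Your framework (classify the pair of neighbours, kill ``close'' pairs by girth, reroute through $v$ for pairs on $Q$) is sensible, and cases (i), (iv), and the girth sub-cases of (ii)--(iii) do go through. But the argument for the decisive cases --- one neighbour on $D^*$ or on an $A_i/B_i$ ear and the other on $Q^*$, i.e.\ your (v), part of (iii), and the $t\in Q^*$ sub-case of (ii) --- has a genuine gap. You claim that among the four cycles $svt\cup P$, with $P$ an $H$-path through a single ear, at least one is an even cycle of length different from $2\ell$. That is false. For $s\in D^*$, $t\in Q^*$, writing $d_3=|D(s,u_3)|$, $d_4=|D(s,u_4)|$, $q_1=|Q(u_1,t)|$, $q_2=|Q(u_2,t)|$, the four lengths are $d_3+a_1+q_1+2$, $d_4+a_2+q_1+2$, $d_3+b_1+q_2+2$, $d_4+b_2+q_2+2$; the first two agree in parity, the last two agree in parity, and the two pairs disagree, so exactly two of the cycles are even --- and both can equal $2\ell$ exactly. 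Concretely, with $\ell=5$, $d=a=b=5$, $a_1=3$, $a_2=2$, $b_1=1$, $b_2=4$, $q=9$, $d_3=2$, $q_1=3$, the four lengths are $10,10,11,15$: no girth violation, no short or long even hole. So parity and hole-length constraints alone do not close these cases.

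What closes them in the paper is the minimality of $|H|$ over balanced $K_4$-subdivisions of type-$\ell$, which you invoke only for (i) and (iv): $C_3\cup C_4\cup svtQ(t,u_2)$ and $C_3\cup C_4\cup svtQ(t,u_1)$ are again balanced $K_4$-subdivisions of type-$\ell$ (their two new face cycles have even length-sum, so by Lemma \ref{even K4} both are odd), with edge counts $3\ell+2+q_2$ and $3\ell+2+q_1$ against $|H|=3\ell+q$; minimality forces $q_1=2$ (Lemmas \ref{find short jump} and \ref{jump-length} rule out $q_1=1$), and then the second subdivision is strictly smaller, a contradiction. You would need to import this argument into your (iii)/(v). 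Two smaller points: the identity $|C_1|=q+\ell$ is wrong ($|C_1|=q+a_1+b_1$, and the even face cycles are $D\cup A_1\cup A_2$ and $D\cup B_1\cup B_2$, which also changes which pairs Lemma \ref{neighbour in C} excludes), although case (i) survives since the two new face cycles there have total length $4+a+b=2\ell+4<4\ell$; and your reduction of the three-neighbour case to a girth contradiction fails when the extra neighbours all lie on $Q^*$, where the connecting $H$-paths are long --- the paper instead works with the two extreme neighbours of $v$ on $Q$.
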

\begin{proof}[Subproof.]
Assume to the contrary that some vertex $v\in V(G)-V(H)-N_G(\{u_3,u_4\})$ has at least two neighbours in $H$. Assume that $v$ only has neighbours in $Q$, implying that $v$ has at least two neighbours in $Q$. Let $v_i$ be the neighbour of $v$ in $Q$ closest to $u_i$ along $Q$ for each integer $1\leq i\leq2$. Since $v$ has no neighbour in $(C_3\cup C_4)\del V(Q)$, by Lemma \ref{even K4}, $G[V(H)\cup\{v\}]\del V(Q^*(v_1,v_2))$ is a balanced $K_4$-subdivision of type-$\ell$ with $|H'|<|H|$, a contradiction. So $v$ has a neighbour in $(C_3\cup C_4)\del V(Q)$.
Since $C_3, C_4$ are even holes and $g(G)=2\ell$, the vertex $v$ has a unique neighbour, say $x$, in $C_3\cup C_4$ by Lemma \ref{neighbour in C}, implying that $v$ has a neighbour in $Q^*$. 

We claim that $x\in V(D^*)$. Assume not. By symmetry assume that
$x\in V(A_2)-\{u_1\}$. Let $y$ be the neighbour of $v$ in $Q$ closest to $u_2$. Since $a=\ell$ implies $a_2<\ell$, we have $|Q(u_1,y)|\geq\ell-1\geq3$. Moreover, since $x\notin\{u_3,u_4\}$, by Lemma \ref{even K4}, $C_3\cup C_4\cup xvyQ(y,u_2)$ is a balanced $K_4$-subdivision of type-$\ell$ whose edge number is less than $|H|$, a contradiction. 

Let $y$ be the neighbour of $v$ in $Q$ closest to $u_2$. Since $v$ has at most one neighbour in $C_3\cup C_4$, we have $y\in V(Q^*)$. Since $Q(u_1,y)yvx$ contains a short jump over $C_4$ by Lemma \ref{find short jump}, we have
$yu_1\notin E(G)$ by Lemma \ref{jump-length}. Since $C_3\cup C_4\cup xvyQ(y,u_2)$ is a balanced $K_4$-subdivision of type-$\ell$, by the minimal choice of $H$, we have $|Q(u_1,y)|=2$, implying $|Q(u_2,y)|\geq3$ as $q>\ell\geq4$. Then $C_3\cup C_4\cup xvyQ(y,u_1)$ is a balanced $K_4$-subdivision of type-$\ell$ whose edge number is less than $|H|$, a contradiction.
\end{proof}
Let $s,t$ be the neighbours of $u_3,u_4$ in $D$, respectively. Set $A_3:=N_G(u_3)-V(C_1)$ and $A_4:=N_G(u_4)-V(C_2)$. Note that $s\in A_3$ and $t\in A_4$. Since $d=\ell\geq4$, we have $D^*\del\{s,t\}\neq\emptyset$. 
\begin{claim}\label{cut-3}
$C_1\cup C_2$ and $D^*\del\{s,t\}$ are in different components of  $G\del (A_3\cup A_4)$. 
\end{claim}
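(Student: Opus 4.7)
The plan is to argue by contradiction and exploit the minimality of $H$. Suppose there is a path in $G\setminus(A_3\cup A_4)$ joining $V(C_1\cup C_2)$ and $D^*\setminus\{s,t\}$, and let $P$ be a shortest such path, with ends $p\in V(C_1\cup C_2)$ and $q\in D^*\setminus\{s,t\}$. Then $P$ is induced and $P^*\cap V(H)=\emptyset$, and by Claim~\ref{1-neighbour} each vertex of $P^*$ has at most one neighbour in $V(H)$. A short observation rules out $p\in\{u_3,u_4\}$: if $p=u_3$, the vertex of $P$ adjacent to $p$ would lie in $N_G(u_3)\setminus V(H)\subseteq A_3$, which $P$ avoids. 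Write $q=d_i$ with $|D(u_3,q)|=i$ and $|D(q,u_4)|=\ell-i$, where $2\leq i\leq\ell-2$; by the $(u_3,s)\leftrightarrow(u_4,t)$ symmetry I may assume $i\leq\ell-i$.

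The idea is to use $P$ to build a smaller balanced $K_4$-subdivision $H^*$ of type-$\ell$, violating the minimality of $H$. I would set $H^*:=(H\setminus V(D^*(u_3,q)))\cup P$ and smooth the two degree-$2$ vertices at $u_3$ and $q$. In the typical case where $p$ is interior to an ear of $H$, this yields a $K_4$-subdivision with branch vertices $\{u_1,u_2,u_4,p\}$ (the case $p\in\{u_1,u_2\}$ is similar and needs only a minor variant). Using the established $a=b=d=\ell$, $q>\ell$, $|C_1|$ and $|C_2|$ odd, and $|C_3|=|C_4|=2\ell$, a direct parity count of the four face cycles of $H^*$ shows that $H^*$ is balanced with its two even face cycles being holes of length exactly $2\ell$. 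Since $|H^*|=|H|-i+|P|$, the inequality $|P|<i$ gives $|H^*|<|H|$; such a bound is extracted by tracking the face cycle of $H^*$ through $P$ and $D(q,u_4)$, whose length is forced to be $2\ell$ when even (and at least $2\ell+1$ when odd), together with $i\leq\ell-i$. If $H^*$ turns out to be of type-$\ell$, minimality is contradicted immediately; if $H^*$ is instead of type-$1$, I would apply Lemma~\ref{easy case} to the theta $C_3\cup C_4\cup P$ to extract an even hole of length strictly greater than $2\ell$, again contradicting $G\in\mathcal{H}_\ell$.

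The principal obstacle will be verifying that $H^*$ is induced, i.e.\ that there is no chord between $V(P^*)$ and $V(H)\setminus V(D^*(u_3,q))$ that breaks the $K_4$-subdivision structure. Claim~\ref{1-neighbour} already restricts each $P^*$ vertex to a unique (if any) neighbour in $V(H)$, and the shortest-path choice of $P$ forces any such neighbour to lie either in $\{s,t\}$ (which have been deleted) or to be adjacent on $P$ to one of $p$ or $q$. The remaining configurations, organised according to which ear of $H$ contains $p$ and where the putative chord lands (in $V(A)$, $V(B)$, $V(Q)$, or $V(D)$), are dispatched either by a further surgery that absorbs the chord into a yet smaller balanced $K_4$-subdivision of type-$\ell$ or by applying Lemma~\ref{jump-length} to the short jump over $C_3$ or $C_4$ produced by the chord. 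This combinatorial bookkeeping, rather than any one single conceptual step, is the most delicate part of the argument.
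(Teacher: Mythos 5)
Your overall plan (take a shortest connecting path avoiding $A_3\cup A_4$, then perform surgery on $H$ to contradict its minimality) matches the spirit of the paper's argument, but the specific surgery you propose breaks down at several points. First, $H^*=(H\setminus V(D^*(u_3,q)))\cup P$ with $u_3$ and $q$ smoothed is a $K_4$-subdivision only when $p$ lies on $Q$ or on the path $A_1\cup B_1$ through $u_3$; when $p\in V(A_2)^*\cup V(B_2)^*$ the resulting cubic multigraph has two parallel ears joining $u_1,u_2$ and two joining $p,u_4$, which is not a subdivision of $K_4$. So your ``typical case'' assertion fails precisely where the argument is hardest; the paper instead keeps $C_1\cup C_2$ intact and attaches $RD(v,u_3)$ as a sixth ear, making $u_3$ (not $p$) the fourth branch vertex. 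Second, $t$ is \emph{not} deleted in your $H^*$ (only $s$ is, and only when $i\geq2$), so chords from $(P^*)^*$ to $t$ are not disposed of by your parenthetical; the paper shows this configuration genuinely occurs and handles it by extracting a short $(s,t)$-jump over $C_3$ and an odd $K_4$-subdivision, and that branch terminates not in a contradiction but in conclusion (2.2) of Lemma \ref{bal K4}. This exposes a structural problem with aiming for a pure contradiction: several branches of the correct argument provably end with ``(2.1) or (2.2) holds, hence the lemma is proved,'' an escape your proposal does not allow itself.

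Third, the minimality bound $|P|<i$ is not available in general. The argument of Claim \ref{1-neighbour} forces $|P|\geq3$, while your normalization $i\leq\ell-i$ gives $i\leq\lfloor\ell/2\rfloor$, so for $\ell\in\{4,5\}$ the inequality $|P|<i$ is impossible and the minimality route is dead on arrival; you would need to convert the arithmetic into a direct contradiction, which you have not done. Even for larger $\ell$, the bound requires the face cycle of $H^*$ through $P\cup D(q,u_4)$ to be an even \emph{hole} of length $2\ell$, which needs both a parity argument (why that face cycle rather than the other one through $P$ is the even one) and the inducedness you defer, and in the odd case your own estimate ($\geq 2\ell+1$) gives the wrong direction. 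Finally, the type-$1$ fallback is not well-formed: if $p\in V(Q)^*$ then $p\notin V(C_3\cup C_4)$, so $C_3\cup C_4\cup P$ is not a theta and Lemma \ref{easy case} does not apply to it; the paper's treatment of the case $u\in V(Q)$ is in fact the most delicate part of the proof and proceeds by a different comparison, using the cycle $B_2Q(u_2,u)PD(w,u_4)$ against $C_3$ and against $C_1\Delta C_2\Delta C_3'$.
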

\begin{proof}[Subproof.]
Assume not. 
Let $R$ be a shortest $(C_1\cup C_2, D^*\del\{s,t\})$-path in $G\del(A_3\cup A_4)$. Let $u,v$ be the ends of $R$ with $u\in V(C_1\cup C_2)$ and $v\in V(D^*)\del\{s,t\}$. By the definition of $A_3, A_4$, we have that $u\notin \{u_3,u_4\}$ and $\{u_3,u_4\}$ and $R^*$ are anti-complete. By the choice of $R$ and \ref{1-neighbour}, we have that {\bf (a)} $|R|\geq3$, the subgraph $(H\cup R)\del\{s,t\}$ of $G$ is induced, and $s,t$ may have a neighbour in $(R^*)^*$ but are anti-complete to the ends of $R^*$.

Assume that both $s$ and $t$ have neighbours in $(R^*)^*$. Then there are $s',t'\in V((R^*)^*)$ such that $ss',tt'\in E(G)$ and $R^*(s',t')$ is anti-complete to $H$ by (a). Since $ss'R(s',t')t't$ is a short $(s,t)$-jump over $C_3$, by Lemma \ref{jump-length}, it has length at least $\ell+1$ as $b=d=\ell$. So $C_1\cup C_2\cup u_3ss'R(s',t')t'tu_4$ is an odd $K_4$-subdivision. Since $a=b=\ell>2$, it follows from Lemma \ref{odd K4} that (2.2) holds. Hence, by symmetry and (a) we may assume that $s$ is anti-complete to $R^*$.

We claim that (2.2) holds when $u\notin V(Q)$. First consider the case $u\in V(A_2B_2)-\{u_1,u_2\}$, say $u\in V(A_2)-\{u_1,u_4\}$. Set $H':=C_1\cup C_2\cup RD(v,u_3)$. Since $v\neq t$ and $s$ is anti-complete to $R^*$, we have that $H'$ is an induced subgraph isomorphic to a subdivision of $K_4$ by (a). When $H'$ is balanced, it must be of type-$\ell$ as $|R|\geq3$, so following a similar way proving $a=b=d=\ell$ we have $u=u_4$, a contradiction to the fact $u\notin \{u_3,u_4\}$. Hence, $H'$ is odd. Using Lemma \ref{odd K4} again, $a_1=1$ and either $b_1=1$ or $uu_1\in E(G)$ as $b=\ell$ and $|R|\geq3$. When $b_1=1$, (2.2) holds. So $uu_1\in E(G)$, implying $|A_2(u,u_4)|=\ell-2\geq2$ as $a=\ell$. When $t$ is anti-complete to $R^*$, since $H'$ is odd and $C_3$ is even, $A_2(u,u_4)D(u_4,v)R$ is an odd hole, so $uu_1QB_2D(u_4,v)R$ is an even hole of length at least $2\ell+2$ as $q>\ell$, a contradiction. So $t$ has a neighbour in $R^*$. Let $w\in N(t)\cap V(R^*)$ be closest to $u$ along $R$. For a similar season, $A_2(u,u_4)u_4twR(w,u)$ is an even hole, so $u_4twR(w,u)uu_1u_3B_1B_2$ is an even hole of length $2\ell+2$ as $b=\ell$, a contradiction. Secondly consider the case $u\in V(A_1B_1)-\{u_1,u_2\}$. Let $P$ be a shortest $(t,u)$-path with $V(P^*)\subseteq V(D(t,v)\cup R^*)$. Then $C_1\cup C_2\cup u_4tP$ is an induced subgraph isomorphic to a subdivision of $K_4$. Following a similar way to deal with the first case, we can prove (2.2) true.

We may therefore assume that $u\in V(Q)$. When $t$ has no neighbour in $(R^*)^*$, set $P:=R$ and $w:=v$; and when $t$ has a neighbour in $(R^*)^*$, set $w:=t$ and let $P$ be a shortest $(t,u)$-path with $P^*\subseteq R^*$.
Set $C'_3:=B_2Q(u_2,u)PD(w,u_4)$. Since $C_2$ is odd, by symmetry we may assume that $C'_3$ is an even cycle. Assume that $C'_3$ is not induced. Then $u=u_1$ and $a_2=1$, implying $|P|>\ell$ as $A_2PD(w,u_4)$ is an odd hole. Moreover, since $a_1=\ell-1>1$, we have that $C_3\Delta C'_3$ is an even hole of length larger than $2\ell$ as $q>\ell$, a contradiction. Hence, $C'_3$ is an even hole. By applying Lemma \ref{easy case} (1) on $C_3\cup C'_3$, we have 
$|D(w,u_4)B_2|=|Q(u_2,u)P|=|B_1D(u_3,w)|=\ell$, so $|D(w,u_4)|=b_1$ and $|D(u_3,w)|=b_2$ as $b=\ell$.
Using Lemma \ref{easy case} (2) on $C_2\cup PD(w,u_4)$, we have $a_2+|Q(u_1,u)|>\ell$, so $|Q(u_1,u)|>1$ as $a_2<\ell$. Hence, $C_1\Delta C_2\Delta C'_3$ is an even cycle of length larger than $2\ell$ as $a=\ell$ and $|C'_3|=2\ell$, so $|D(u_3,w)|=b_2=1$ as $|D(u_3,w)|=b_2$. That is, $w=s$, which is not possible as $w\in\{t,v\}$ and $v\notin\{s,t\}$. 
\end{proof}

Let $szs'$ be the 3-vertex path on $D^*$. Note that $s'$ maybe $t$.  We may assume that $z$ has degree at least three, for otherwise (2.1) holds. 
Let $Z$ be a connected induced subgraph of $G\del(A_3\cup A_4)$ containing a neighbour of $z$ and disjoint with $D^*$. We may assume that $Z$ is chosen with $|Z|$ as large as possible. 

We claim that $Z$ is anti-complete to $V(H)\del\{s,z,s'\}$. Since $Z$ is in a component of $G\del(A_3\cup A_4)$ not containing $C_1\cup C_2$, the graph $Z$ is anti-complete to $C_1\cup C_2$.
Assume that $Z$ is not anti-complete to $A_4\del\{s'\}$. 
Let $R$ be a shortest $(s,u_4)$-path with $V(R^*)\subseteq A_4\cup Z\cup\{z\}$. Then $C_1\cup C_2\cup Rsu_3$ is an induced subgraph isomorphic to a subdivision of $K_4$. When it is odd, by Lemma \ref{odd K4}, (2.2) holds from the facts that $a=b=\ell$ and $|R|\geq2$; and when it is balanced, following a similar way proving $a=b=d=\ell$ we have $|R|=\ell-1$, so $RD(s,u_4)$ contains a cycle of length at most $2\ell-2$, a contradiction. Hence, $Z$ is anti-complete to $A_4\del\{s'\}$. 
Similarly, we can further prove that $Z$ is anti-complete to $A_3\del\{s\}$ and $D(s',t)\del s'$. This proves the claim.

Assume that $s,s'$ have a neighbour in $Z$. Let $R$ be a shortest $(s,s')$-path with $V(R^*)\subseteq Z$. Since $Rszs'$ is a cycle, $|R|\geq2\ell-2$, so $B_2B_1u_3sRD(s',u_4)$ is an odd hole by the claim proved in the last paragraph. Then $C_1\cup C_2\cup u_3sRD(s',u_4)$ is an odd $K_4$-subdivision, so (2.2) holds from Lemma \ref{odd K4}. So we may assume that $s$ or $s'$ has no neighbour in $Z$, implying that $\{s',z\}$ or $\{s,z\}$ is a cut of $G$, so (2.1) holds.
\end{proof}

\begin{figure}[htbp]
\begin{center}
\includegraphics[height=6cm,page=2]{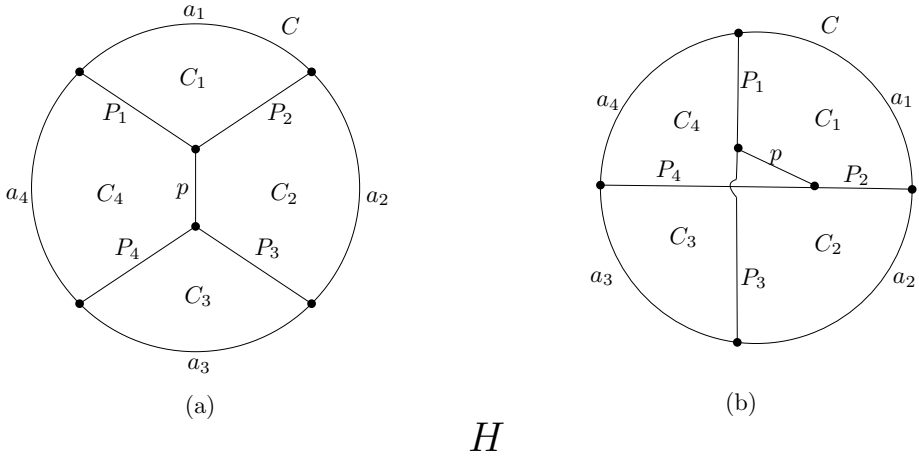}
\caption{
Let $C,C_1,C_2,C_3,C_4$ be the face cycles of $H$. Set $p_i:=|P_i|$ for any integer $1\leq i\leq4$. Let $a_1, a_2, a_3, a_4,p$ denote the lengths of its corresponding ears of $H$. For each $1\leq i\leq4$, we have $p_i,a_i\geq1$. But $p$ maybe equal to $0$.}
\label{w4}
\end{center}
\end{figure}

\begin{lemma}\label{prism}
Let $\ell\geq4$ be an integer and $H$ be an induced subgraph of a graph $G\in\mathcal{H}_{\ell}$. Assume that $H$ is pictured as Figure \ref{w4} and $C$ is an even hole of $G$. If $G$ has neither a degree-$2$ vertex nor a $K_1$- or $K_2$-cut, then one of the following holds. 
\begin{itemize}
\item[(1)] Either $a_1=a_3=1$ or $a_2=a_4=1$.
\item[(2)] $H$ is pictured as Figure \ref{w4} (a), $p\geq2\ell-2$, $\{a_1,a_3\}=\{1,\ell\}$ and $a_i=p_i=p_{i+1}=1$ for some integer $i\in\{2,4\}$, where subscripts are modulo $4$.
\item[(3)] $H$ is pictured as Figure \ref{w4} (b), and $a_i=a_{i+1}=1$ for some integer $1\leq i\leq4$, where subscripts are modulo $4$.
\end{itemize}
\end{lemma}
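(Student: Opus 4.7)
The plan is to analyze the parities of the five face cycles of $H$ and apply the $K_4$-subdivision lemmas to substructures obtained from $H$. Viewing cycles as edge sets, $C_1 \Delta C_2 \Delta C_3 \Delta C_4 = C$ (each spoke $P_i$ and the central ear $P$ lies in exactly two of the $C_j$, while each $a_j$ lies only in $C_j$), so an even number---namely $0$, $2$, or $4$---of $C_1,\ldots,C_4$ are odd. Every even $C_i$ has length exactly $2\ell$ and every odd one has length at least $2\ell + 1$. Moreover, by Lemma \ref{neighbour in C} applied to the central vertices ($v_1, v_2$ in case (a), or $v$ in case (b)), which lie outside $C$, not both of $\{p_1, p_2\}$ nor both of $\{p_3, p_4\}$ can equal $1$ in case (a), and at most one $p_i$ equals $1$ in case (b); in particular at least two indices $i$ satisfy $p_i \geq 2$.

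The crucial observation is that whenever $p_i \geq 2$, the induced subgraph $H_i := H \setminus V(P_i^*)$ is a $K_4$-subdivision of $G$ whose four face cycles are $C$, $C_{i+1}$, $C_{i+2}$, and $C_{i-1} \Delta C_i$ (indices mod $4$); moreover the ear of $H_i$ shared by the two faces containing $C$ is a concatenation of two consecutive $a_j$'s and so has length at least $2$. Depending on the parities of the $C_j$'s, $H_i$ is either even (contradicting Lemma \ref{even K4} and hence forcing $p_i = 1$) or balanced. In the balanced case, if the ear $D$ shared by the two even faces of $H_i$ has length $\geq 2$, Lemma \ref{bal K4}(2) forces $q > \ell$ and $a = b = d = \ell$; and Lemma \ref{bal K4}(2.2)---applicable because $\ell \geq 4$ and $G$ has neither a degree-$2$ vertex nor a $K_1$- or $K_2$-cut---further forces certain adjacent ears of $H_i$ to have length $1$.

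Case ``$0$ odd'' is ruled out: for any $i$ with $p_i \geq 2$, $H_i$ is an even $K_4$-subdivision, contradicting Lemma \ref{even K4}. Case ``$4$ odd'' is handled by case analysis on which $p_i = 1$: if all $p_i \geq 2$, applying Lemma \ref{bal K4}(2) to each $H_i$ gives $p_j > \ell$ for all $j$, contradicting the identity $b = p_2 + p + p_4 = \ell$ coming from $H_1$; if exactly one $p_i = 1$, the balanced $H_{i+2}$ has $q = p_i = 1$, contradicting Lemma \ref{bal K4}(1); and the remaining case $\{p_1, p_3\} = \{1\}$ or $\{p_2, p_4\} = \{1\}$ combined with Lemma \ref{bal K4}(2.2) applied to the balanced $H_j$'s (for the opposite $p$-index $\geq 2$) forces $a_1 = a_3 = 1$ (respectively $a_2 = a_4 = 1$), which is conclusion $(1)$.

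Case ``$2$ odd'' is the main obstacle, splitting by whether the two odd face cycles are adjacent or opposite. If adjacent, say $C_j, C_{j+1}$, then $H_{j+1}$ (when $p_{j+1} \geq 2$) is an even $K_4$-subdivision, forcing $p_{j+1} = 1$; but then every other $H_k$ with $p_k \geq 2$ is balanced with $q = p_{j+1} = 1$, contradicting Lemma \ref{bal K4}(1), so the adjacent subcase is entirely impossible. If opposite, say $C_1, C_3$, every $H_i$ with $p_i \geq 2$ is balanced; the systematic application of Lemma \ref{bal K4}(2) together with (2.2) to each $H_i$, combined with the length lower bounds $|C_1|, |C_3| \geq 2\ell + 1$, resolves into conclusion $(1)$ when opposite $a$-entries are forced to $1$, conclusion $(3)$ in the case (b) subcase where adjacent $a$-entries become $1$, or the extreme configuration of conclusion $(2)$ in case (a) where $\{a_1, a_3\} = \{1, \ell\}$ and $a_i = p_i = p_{i+1} = 1$ for some $i \in \{2, 4\}$, with $p \geq 2\ell - 2$ forced by the odd lower bound on $|C_{i+1}|$. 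The most delicate bookkeeping is tracking which $a$- and $p$-lengths are forced to $1$ and distinguishing case (a) from case (b).
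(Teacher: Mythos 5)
Your overall strategy coincides with the paper's: use the parity identity $C_1\Delta C_2\Delta C_3\Delta C_4=C$ to conclude that $0$, $2$ or $4$ of the $C_i$ are odd, and, for each spoke with $p_i\geq 2$, delete it to obtain an induced $K_4$-subdivision to which Lemmas \ref{even K4}, \ref{odd K4} and \ref{bal K4} apply. The ``$0$ odd'' and ``$4$ odd'' cases are right in spirit, but already there your bookkeeping slips: the trichotomy in the ``$4$ odd'' case omits configurations such as $p_1=p_4=1$ in picture (a) (two spokes of length one that are neither a lone one nor an opposite pair), and the identity ``$b=p_2+p+p_4=\ell$'' is not an ear-sum of any face of $H_1$ --- the correct identities coming from Lemma \ref{bal K4}(2) are of the form $a_2+p_2+p=\ell$ and $a_3+p_4=\ell$ (these still contradict $p_j>\ell$, so the conclusion survives).

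The genuine gap is in the ``two odd faces'' case, which is where conclusions (2) and (3) actually live. In the adjacent subcase, your claim that every other $H_k$ with $p_k\geq2$ is balanced with $q=p_{j+1}=1$ is false: for several choices of $k$ the two odd faces of $H_k$ share the ear $P_{j+1}\cup P$, of length $p_{j+1}+p$, which can exceed $1$, so Lemma \ref{bal K4}(1) gives nothing and one must argue that a suitable $k$ (whose odd faces share exactly $P_{j+1}$) exists with $p_k\geq2$. In the opposite subcase you only assert that ``systematic application'' of Lemma \ref{bal K4}(2) and (2.2) resolves into (1)--(3); this is precisely the bulk of the paper's proof and cannot be waved through. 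In particular, the conclusion $\{a_1,a_3\}=\{1,\ell\}$ in (2) does not follow from Lemma \ref{bal K4} at all: the paper obtains it by applying Lemma \ref{easy case}(1) to the induced theta graph $C\cup C_i$ when $C_i$ is even and $p>0$, a step absent from your plan. Likewise the bound $p\geq2\ell-2$ and the forcing of $a_i=p_i=p_{i+1}=1$ come from a chain of type-$\ell$ versus type-$1$ dichotomies (each balanced $H_k$ either would violate (2.2), which forces the deleted spoke to have length $1$ so that $H_k$ is not induced, or satisfies (2.2), which forces two specific adjacent ears to have length $1$), ending in parity or girth contradictions that eliminate the spurious branches. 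Without carrying out that analysis you have shown only that something degenerate happens, not the precise statements (2) and (3).
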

\begin{proof}
Since $C$ is an even hole, there are exactly two or four holes in $\{C_1,C_2, C_3,C_4\}$ are odd.

Case 1. $p=0$.

Note that the two graphs (a) and (b) in Figure \ref{w4} are the same when $p=0$. 
Since no vertex outside $C$ has two neighbours in $C$ by Lemma \ref{neighbour in C}, by symmetry we may assume that $p_1,p_2,p_3>1$. We claim that $|C_1|,|C_2|$ have different parity. Assume not. Then $C_1\Delta C_2 $ is an even hole. Since $G$ has no even $K_4$-subdivision by Lemma \ref{even K4}, $H\del E(P_2)$ is a balanced $K_4$-subdivision of type-$\ell$. Hence, either $p_1=a_4=1$ or $p_3=a_3=1$ by Lemma \ref{bal K4} (2), a contradiction as $p_1,p_3>1$. Hence, $|C_1|,|C_2|$ have different parity. By symmetry assume that $C_1$ is odd and $C_2$ is even.

Since $H\del E(P_1)$ is not an even $K_4$-subdivision by Lemma \ref{even K4}, $C_4$ is even, so $C_3$ is odd as $C$ is even. Since $H\del E(P_1)$ is a balanced $K_4$-subdivision of type-$\ell$ not satisfying Lemma \ref{bal K4} (2.2) when $a_2>1$, we have $a_2=1$. Since $H\del E(P_3)$ is a balanced $K_4$-subdivision of type-$\ell$ not satisfying Lemma \ref{bal K4} (2.2) when $a_4>1$, we have $a_4=1$. So (1) holds.

Case 2. $p>0$ and $H$ is isomorphic to Figure \ref{w4} (a).

By Lemma \ref{neighbour in C} and symmetry we may assume that $p_1>1$. We claim that (1) holds when $|C_1|,|C_4|$ have the same parity. 
Since $H\del E(P_1)$ is a balanced $K_4$-subdivision of type-$\ell$, it follows from Lemma \ref{bal K4} that $p_3>1$, $p_4=a_3=1$ and $C_2, C_3$ are odd as $p>0$. Then $C_2\Delta C_3$ is an even hole. 
Since $H\del E(P_3)$ is a balanced $K_4$-subdivision of type-$\ell$, by Lemma \ref{bal K4} (2) again, $p_2=a_1=1$, so (1) holds. 

We may therefore assume that $|C_1|,|C_4|$ have different parity, implying that exactly two holes in $\{C_1,C_2, C_3,C_4\}$ are odd. So exactly one of the following holds.
\begin{itemize}
\item[(C1)] $|C_1|$ and $|C_2|$ have the same parity, and $|C_3|,|C_4|$ have the same parity. So $p_2=p_4=1$ by the symmetry of the claim proved in the last paragraph.
\item[(C2)] $|C_1|,|C_3|$ have the same parity, and $|C_2|,|C_4|$ have the same parity.
\end{itemize}
Assume that (C1) holds. Since $p_4=1$ implies $p_3>1$, by symmetry we may assume that $|C_1|,|C_2|$ are odd, and $|C_3|,|C_4|$ are even. Then $H\del E(P_3)$ is a balanced $K_4$-subdivision whose ear $P_2$ shared by its odd holes has length one, a contradiction to Lemma \ref{bal K4} (1). So (C2) holds.

First we consider the case that $|C_1|,|C_3|$ are odd, and $|C_2|,|C_4|$ are even. Since $H\del E(P_1)$ is a balanced $K_4$-subdivision, $p_4>1$ by Lemma \ref{bal K4} (1). When $a_2>1$, since $H\del E(P_1)$ and $H\del E(P_4)$ are balanced $K_4$-subdivisions of type-$\ell$, we have $a_1=a_3=1$ by Lemma \ref{bal K4} (2), so (1) holds. Hence, we may assume that $a_2=1$. Then $a_4>1$, for otherwise (1) holds. Since $H\del E(P_3)$  is a balanced $K_4$-subdivision of type-$\ell$ not satisfying Lemma \ref{bal K4} (2.2) when $p_3>1$, we have $p_3=1$. Similarly, $p_2=1$. Hence, $p=2\ell-3$ as $C_2$ is an even hole. Then $C_4$ is an even hole of length larger than $2\ell$ as $a_4,p_4,p_1>1$, a contradiction.

Secondly, we consider the case that $|C_1|,|C_3|$ are even, and $|C_2|,|C_4|$ are odd. Since $p>0$ implies that $C\cup C_i$ is induced for each $i\in\{1,3\}$, by Lemma \ref{easy case} (1), either $a_i=\ell$ or $a_i=1$. By symmetry assume that $a_1=\ell$ and $a_3=1$ as $|C|=2\ell$. Since $H\del E(P_3)$ is a balanced $K_4$-subdivision of type-$\ell$ not satisfying Lemma \ref{bal K4} (2.2) when $p_3>1$, we have $p_3=1$, implying $p_4>1$. Since $H\del E(P_4)$ is a balanced $K_4$-subdivision of type-$\ell$, by Lemma \ref{bal K4} (2.2) again, we have $a_2=p_2=1$, so $p\geq2\ell-2$ as $C_2$ is odd. That is, (2) holds.

Case 3. $p>0$ and $H$ is isomorphic to Figure \ref{w4} (b).

Let $P$ be the ear of $H$ with length $p$. When $p>1$, since $H\del E(P)$ is an odd $K_4$-subdivision or a balanced $K_4$-subdivision of type-$\ell$, (3) holds by Lemma \ref{odd K4} or Lemma \ref{bal K4} (2.2). So we may assume that $p=1$.
By Lemma \ref{neighbour in C} and symmetry we may assume that $p_1>1$. We claim that $|C_1|,|C_4|$ have different parity. Assume not. Since $H\del E(P_1)$ is a balanced $K_4$-subdivision of type-$\ell$, it follows from Lemma \ref{bal K4} that $p_3\geq\ell$ (as $p=1$), both $C_2$ and $C_3$ are odd, and either $p_4=a_3=1$ or $p_2=a_2=1$. By symmetry assume $p_2=a_2=1$, implying $p_4>1$. Since $H\del E(P_3)$ is a balanced $K_4$-subdivision of type-$\ell$, by Lemma \ref{bal K4} (2) again, $a_1=1$ and $C_1, C_4$ are odd. 
Moreover, since $C_2, C_3$ are also odd and $C$ is even, $p_1+p_3=p_2+p_4=\ell$ by Lemma \ref{easy case} (1), so
\[4(2\ell+1)\leq|C_1|+|C_2|+ |C_3|+ |C_4|=|C|+2(p_1+p_2+p_3+p_4)+4p=6\ell+4,\] a contradiction.

We may therefore assume that $C_1$ is even and $C_2$ is odd, so there are exactly two holes in $\{C_1,C_2, C_3,C_4\}$ are odd as $C$ is even. Moreover, since $p_2>1$ or $p_4>1$, by symmetry and the claim proved in the last paragraph, $C_4$ is even and $C_3$ is odd. Note that $H\del E(P_1)$ is a balanced $K_4$-subdivision. So $p_2>1$ by Lemma \ref{bal K4} (1). When $a_3>1$, since $H\del E(P_1)$ is  of type-$\ell$ not satisfying Lemma \ref{bal K4} (2.2), we have $a_3=1$, implying $a_1>1$ for otherwise (1) holds. Since $H\del E(P_4)$ is a balanced $K_4$-subdivision of type-$\ell$ not satisfying Lemma \ref{bal K4} (2.2) when $p_4>1$, we have $p_4=1$. Similarly, $p_3=1$, implying $|C_3|=4$ as $p=1=a_3$, a contradiction.
\end{proof}

\section{Proof of Theorem \ref{main thm+}}
Let $C$ be a cycle of a graph $G$ and $e,f$ be chords of $C$. If no cycle in $C\cup e$ containing $e$ contains the two ends of $f$, we say $e,f$ are {\em crossing}; otherwise they are {\em parallel}. Note that by our definition, $e,f$ are parallel when they share an end. When $C$ is a hole and $P_i$ is an $(u_i,v_i)$-jump over $C$ for each integer $1\leq i\leq2$, if $u_1v_1$ and $u_2v_2$ are parallel of $C$ after we replace $P_1$ by the edge $u_1v_1$ and $P_2$ by the edge $u_2v_2$, we say that $P_1,P_2$ are {\em parallel}; otherwise, they are {\em crossing}.

\begin{lemma}\label{type-e}
Let $\ell\geq3$ be an integer and  $C$ be an even hole of a graph $G\in \mathcal{H}_{\ell}$. Then all short jumps over $C$ of type-e have the same ends.
\end{lemma}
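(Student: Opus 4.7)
The plan is to argue by contradiction: suppose $P_1$ and $P_2$ are two type-e short jumps over $C$ with distinct end-pairs $\{s_1,t_1\}\ne\{s_2,t_2\}$, and aim to contradict either Lemma~\ref{even K4} or the girth $g(G)=2\ell$. By Lemma~\ref{jump-length} each type-e short jump has length $\ell$ and partitions $C$ into two arcs of length $\ell$, so its ends are antipodal on $C$; since the antipode of a vertex on the $2\ell$-cycle $C$ is unique, the two end-pairs must actually be disjoint. I will label $C=v_0v_1\cdots v_{2\ell-1}v_0$ so that $s_1=v_0$, $t_1=v_\ell$, $s_2=v_i$, $t_2=v_{i+\ell}$ for some $1\le i\le\ell-1$, and observe that the four ends then interleave around $C$ with arc distances $i,\ell-i,i,\ell-i$.

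The proof then splits on whether $P_1,P_2$ interact off $C$. In the clean case where $V(P_1^*)\cap V(P_2^*)=\emptyset$ and no edge joins $V(P_1^*)$ to $V(P_2^*)$, I would check that $C\cup P_1\cup P_2$ is induced---this uses the inducedness of each of $C,P_1,P_2$ together with the short-jump condition that $V(P_i^*)$ is anti-complete to $V(C)\setminus\{s_i,t_i\}$---and observe that it is a subdivision of $K_4$ with degree-$3$ vertices $s_1,s_2,t_1,t_2$ whose four face cycles all have length $2\ell$ (by a direct arc-and-jump sum). This is an even $K_4$-subdivision, contradicting Lemma~\ref{even K4}.

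Otherwise there is either a shared interior vertex $v\in V(P_1^*)\cap V(P_2^*)$ or a chord $xy$ with $x\in V(P_1^*)$ and $y\in V(P_2^*)$; I set $\epsilon\in\{0,1\}$ accordingly (with $x=y=v$ in the shared case) and $a:=d_{P_1}(s_1,x)$, $b:=d_{P_2}(s_2,y)$. For each pair $(u,w)\in\{s_1,t_1\}\times\{s_2,t_2\}$ the walk from $u$ to $w$ obtained by concatenating $P_1(u,x)$, the bridge of length $\epsilon$, and $P_2(y,w)$ has interior contained in $V(P_1^*)\cup V(P_2^*)$, hence disjoint from $V(C)$; extracting a simple $(u,w)$-path of at most that length and attaching the shorter $(u,w)$-arc of $C$---of length $i,i,\ell-i,\ell-i$ respectively---yields a simple cycle in $G$ of length $\ge 2\ell$. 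The four resulting inequalities in $a,b,\epsilon,i$ force $\epsilon\ge i$ upon adding the ``crossing'' pair indexed by $(s_1,t_2)$ and $(t_1,s_2)$; since $i\ge 1$, this immediately rules out $\epsilon=0$, so $\epsilon=1$ and $i=1$. The ``parallel'' pair indexed by $(s_1,s_2)$ and $(t_1,t_2)$ then squeezes $a+b$ between $2\ell-2$ and $2$, forcing $\ell\le 2$ and contradicting $\ell\ge 3$.

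The main care-point is in this second case: I must check that each simple path extracted from a walk, combined with its arc of $C$, forms a genuine simple cycle in $G$ rather than a mere closed walk. This reduces to showing that the path's interior is disjoint from the arc's interior, which follows because the former lies in $V(P_1^*)\cup V(P_2^*)$ while the latter lies in $V(C)$, and the short-jump conditions ensure these sets are disjoint.
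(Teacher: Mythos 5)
Your proof is correct, and its overall strategy matches the paper's: both arguments reduce to two crossing type-e jumps whose end-pairs are disjoint antipodal pairs of $C$ (via Lemma \ref{jump-length}), and both derive a contradiction from the girth together with the absence of even holes longer than $2\ell$. The packaging differs. The paper builds a single cycle $C'\subseteq C\cup P_1\cup P_2$ containing both jumps, of even length between $2\ell+2$ and $3\ell$, and notes that $C'$ can be neither induced (it would be a too-long even hole) nor non-induced (a chord would yield a cycle of length less than $2\ell$). Your two cases correspond exactly to this dichotomy. In the non-interacting case you invoke Lemma \ref{even K4}; this is valid (all four face cycles of $C\cup P_1\cup P_2$ do have length $2\ell$), but it is a detour, since the induced cycle formed by $P_1$, $P_2$ and two arcs of $C$ is already an even hole of length $2\ell+2i>2\ell$. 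In the interacting case your four girth inequalities replace the paper's one-line chord argument with an explicit computation, and the computation checks out: the two crossing inequalities sum to $\epsilon\ge i\ge 1$, forcing $\epsilon=i=1$, and the two parallel ones then sum to $\epsilon+i\ge\ell$, i.e.\ $\ell\le 2$. The one point that needs the care you already flag --- that each extracted path plus its arc is a genuine cycle --- does hold, because jump interiors are disjoint from $V(C)$. So the proposal is a correct, somewhat more explicit rendering of the same idea.
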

\begin{proof}
Assume to the contrary that $P_1$ and $P_2$ are short jumps over $C$ of type-e whose ends are not the same. Then $P_1$ and $P_2$ are crossing and $|P_1|=|P_2|=\ell$ by Lemma \ref{jump-length}. Since $|C|=2\ell=g(G)$, we have $V(P_1)\cap V(P_2)=\emptyset$, so there is a cycle $C'$ in $C\cup P_1\cup P_2$ containing $P_1\cup P_2$ with $2\ell+2\leq|C'|\leq3\ell$. 
Since the two holes in $C\cup P_i$ containing $P_i$ are even for each integer $1\leq i\leq 2$, we have that $|C'|$ is even, so either $C'$ is an even hole of length at least $2\ell+2$ or $G[V(C')]$ contains a hole of length less than $2\ell$ as $\ell\geq3$, which is not possible as $g(G)=2\ell$.
\end{proof}

Let $s,t\in V(C)$ be nonadjacent. 
Except there are short $(s,t)$-jumps of type-o and type-e at the same time, all $(s,t)$-jumps have the same parity although their lengths maybe different. In this paper, we only put attention on the parity of jumps over $C$ and do not care which jump is chosen. That is, if we say $P_1,P_2$ are jumps over $C$, we mean that they do not have the same ends.
\begin{lemma}\label{jump-cycle-parity}
Let $\ell\geq2$ be an integer and  $C$ be an even hole of a graph in $\mathcal{H}_{\ell}$. For any $1\leq i\leq2$, let $P_i$ be a short or local jump over $C$ such that it is of type-o when it is short and such that it is across one vertex when it is local. Assume that $P_1$ and $P_2$ are parallel and internally  disjoint. Then $P_1^*$ and $P^*_2$ are not anti-complete. 
\end{lemma}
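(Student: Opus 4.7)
The plan is by contradiction: suppose $V(P_1^*)\cap V(P_2^*)=\emptyset$ and no edge of $G$ joins $P_1^*$ to $P_2^*$. From $C\cup P_1\cup P_2$ I will extract an induced even cycle $\Gamma$ of length strictly greater than $2\ell$, contradicting $G\in\mathcal{H}_{\ell}$.

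First I record length lower bounds on the $P_i$. If $P_i$ is a short jump of type-o, then Lemma~\ref{easy case}(2) applied to the induced theta $C\cup P_i$ gives $|P_i|>\ell$. If $P_i$ is a local jump across one vertex $q_i$, then for any $v\in N(q_i)\cap V(P_i^*)$ the cycle $s_iq_ivP_i(v,s_i)$ has length $2+|P_i(s_i,v)|$; combined with $g(G)=2\ell$ this forces $|P_i(s_i,v)|\ge 2\ell-2$, and symmetrically $|P_i(v,t_i)|\ge 2\ell-2$, hence $|P_i|\ge 4\ell-4$. In either regime, for $\ell\ge 2$ one has $|P_i|>\ell$, and hence $|P_1|+|P_2|>2\ell$.

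Next I set up coordinates. Label the ends of $P_i$ as $s_i,t_i$, and by the parallel hypothesis name the two $(s_1,t_1)$-arcs of $C$ as $A_1,A_2$ so that $s_2,t_2\in V(A_1)$. In the distinct-endpoints case, subdivide $A_1$ at $s_2,t_2$ into sub-arcs $B_1,B_2,B_3$ arranged along $A_1$ in the order $s_1,B_1,s_2,B_2,t_2,B_3,t_1$, and define
\[
\Gamma := P_1\cup B_3\cup P_2\cup B_1,
\]
the cycle $s_1\to P_1\to t_1\to B_3\to t_2\to P_2\to s_2\to B_1\to s_1$. In the shared-endpoint case $s_1=s_2$ (the case $t_1=t_2$ is symmetric), $B_1$ is trivial and I put $\Gamma:=P_1\cup B_3\cup P_2$. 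By construction $\Gamma$ avoids both $A_2$ and $B_2$; in particular the chord-vertex $q_i$ of a local $P_i$ (lying on $A_2$ when $i=1$ and on $B_2$ when $i=2$) is not a vertex of $\Gamma$.

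To verify that $\Gamma$ is induced and even: each of $P_1,P_2,B_1,B_3$ is induced individually, the anti-completeness of $P_1^*$ and $P_2^*$ is our standing assumption, and each $P_i^*$ is anti-complete either to $V(C)\setminus\{s_i,t_i\}$ (short case) or to the long-arc interior $Q_{i,2}^*$ (local case) with $B_1,B_3\subseteq Q_{i,2}$, ruling out chords between $P_i^*$ and $B_1\cup B_3$; chord candidates at the junctions $s_1,t_1,s_2,t_2$ are excluded by the fact that $C$ is a hole. For parity, use that $A_j\cup P_i$ ($j=1,2$) is odd when $P_i$ is short type-o, and $P_i\cup Q_{i,2}$ is an induced odd hole when $P_i$ is local---were it even, $|P_i|=2$ would follow, creating a $P_i^*$-vertex with two neighbours on $C$ and contradicting Lemma~\ref{neighbour in C}. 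A short computation modulo $2$ using these identities together with $|C|$ even forces $|\Gamma|$ even. Hence $\Gamma$ is an even hole, so $|\Gamma|=2\ell$ by the defining property of $\mathcal{H}_\ell$; but $|\Gamma|\ge |P_1|+|P_2|>2\ell$, a contradiction.

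The main obstacle is the induced-ness verification for $\Gamma$: across the short/short, short/local, and local/local sub-cases with either distinct or shared endpoints, one must rule out the various chord possibilities from the four corner vertices $s_i,t_i$ and (in the local case) through the chord-vertices $q_i$. Once this bookkeeping is dispatched, the parity computation and final length comparison are immediate.
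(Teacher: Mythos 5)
Your argument is correct and is essentially the paper's proof read in the contrapositive direction: both form the cycle $\Gamma$ in $C\cup P_1\cup P_2$ containing $P_1\cup P_2$, show it is even via the parity of the two (odd) cycles in each $C\cup P_i$, bound its length below $|P_1|+|P_2|>2\ell$, and conclude it cannot be induced, so a chord must join $P_1^*$ to $P_2^*$. Your extra bookkeeping (the explicit $|P_i|\ge 4\ell-4$ bound for local jumps and the chord-by-chord inducedness check) only fills in details the paper leaves implicit.
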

\begin{proof}
For any $1\leq i\leq2$, since $|P_i|>2\ell$ when it is local, the two cycles in $C\cup P_i$ containing $P_i$ are odd no matter whether $P$ is short or local. Moreover, since $C$ is even, so is the cycle in $C\cup P_1\cup P_2$ containing $P_1\cup P_2$, which is not induced since it has length at least $2\ell+2$ by Lemma \ref{jump-length} possibly. Hence,  $P_1^*$ and $P^*_2$ are not anti-complete.
\end{proof}

\begin{lemma}\label{4-vtx}
Let $\ell\geq4$ be an integer and  $C$ be an even hole of a graph $G\in \mathcal{H}_{\ell}$. For any $1\leq i\leq2$, let $P_i$ be a short $(u_i,v_i)$-jump over $C$ 
such that $P_1$ and $P_2$ are of type-o when they are parallel. If $G$ has neither a degree-$2$ vertex nor a $K_1$- or $K_2$-cut, then one of the following holds.
\begin{itemize}
\item[(1)] When $u_1=u_2$, we have $v_1v_2\in E(G)$. 
\item[(2)] When $|\{u_1,u_2,v_1,v_2\}|=4$, either $u_1u_2,v_1v_2\in E(G)$ or $u_iu_{3-i}v_i$ is a path on $C$ for some integer $1\leq i\leq2$. 
\end{itemize}
\end{lemma}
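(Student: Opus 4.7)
The strategy is to argue by contradiction in each part, making repeated use of the structural results of Sections 2 and 3---especially Lemmas \ref{easy case}, \ref{neighbour in C}, \ref{find short jump}, \ref{jump-length}, \ref{even K4}, \ref{odd K4}, and \ref{bal K4}.

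For Part (1), with $u_1=u_2=u$, the jumps $P_1,P_2$ share an endpoint and are hence parallel, so by hypothesis both are of type-o, giving $|P_1|,|P_2|>\ell$ via Lemma \ref{jump-length}. Suppose for contradiction that $v_1v_2\notin E(G)$ and partition $C$ into arcs $A_1,A_2,A_3$ at $u,v_1,v_2$, where $A_i$ is the arc from $u$ to $v_i$ not through $v_{3-i}$ (for $i\in\{1,2\}$) and $A_3$ is the arc from $v_1$ to $v_2$ not through $u$; note $|A_3|\geq 2$. First assume $P_1\cap P_2=\{u\}$. The cycle $F$ obtained by traversing $A_3$ from $v_1$ to $v_2$, then $P_2$ back to $u$, then $P_1$ to $v_1$, has even length by a parity count using type-o of $P_1,P_2$, and Lemma \ref{easy case} (2) applied to the induced theta $C\cup P_i$ gives $|P_i|>2\ell-|A_i|$, whence $|F|=|A_3|+|P_1|+|P_2|>2\ell+2|A_3|>2\ell$. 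Since $G$ contains no even hole of length greater than $2\ell$, $F$ has a chord; short-jump of $P_1,P_2$ together with Lemma \ref{neighbour in C} and $v_1v_2\notin E(G)$ force any such chord to lie between $P_1^*$ and $P_2^*$. Using this chord, I build a $(v_1,v_2)$-path whose interior lies entirely in $P_1^*\cup P_2^*$; applying Lemma \ref{find short jump} to a shortest such path yields a short $(v_1,v_2)$-jump $P'$ with $|P'|\geq\ell$ by Lemma \ref{jump-length}. The length constraints $|P_i|>2\ell-|A_i|$ and $|P'|\geq\ell$ together with the arcs $A_1,A_2,A_3$ then force either an even hole of length greater than $2\ell$ or a cycle of length less than $2\ell$, a contradiction. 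When $P_1\cap P_2$ contains an internal vertex, rerouting through the common vertex nearest $u$ on each $P_i$ reduces to the internally disjoint case.

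For Part (2), assume the four endpoints are distinct and split by the cyclic order of $u_1,v_1,u_2,v_2$ on $C$. In the crossing case (order $u_1,u_2,v_1,v_2$), $C\cup P_1\cup P_2$---after first using any chord between $P_1^*$ and $P_2^*$ to reduce as in Part (1)---is an induced $K_4$-subdivision with ears $C(u_1,u_2),C(u_2,v_1),C(v_1,v_2),C(v_2,u_1),P_1,P_2$, whose face-cycle parities are dictated by the types of $P_1,P_2$. If both are type-e, Lemma \ref{even K4} yields a contradiction; if both type-o, Lemma \ref{odd K4} provides two incident length-$1$ ears, which (since $|P_i|>\ell$) must both be $C$-arcs, identifying---after the permissible relabeling $u_i\leftrightarrow v_i$ within each $P_i$---one of the configurations listed in (2); if exactly one is type-o and one type-e, the $K_4$-subdivision is balanced and Lemma \ref{bal K4} applies---its clause (2.1) is excluded by our hypothesis on cuts and degree-$2$ vertices, so clause (2.2) again forces the desired conclusion. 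In the parallel sub-case (cyclic order $u_1,u_2,v_2,v_1$, both $P_i$ type-o by hypothesis), setting $A=C(u_1,u_2)$, $B=C(u_2,v_2)$, $D=C(v_2,v_1)$, $E=C(v_1,u_1)$, the cycle $F'=A P_2 D P_1$ is even by parity count and of length greater than $2\ell$ (via Lemma \ref{easy case} (2), which yields $|P_1|+|E|,|P_2|+|B|>2\ell$); chord analysis as in Part (1) forces a chord between $P_1^*$ and $P_2^*$, which one uses to build a short $(u_1,u_2)$- or $(v_1,v_2)$-jump whose constrained length (by Lemma \ref{jump-length}) together with the geometry of $F'$ forces the conclusion of (2) or a direct contradiction.

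The principal obstacle is the detailed chord analysis in Part (1) and the parallel sub-case of Part (2): one must verify that each candidate path built from a chord is genuinely a short jump, in particular meeting the hypothesis of Lemma \ref{find short jump}, which may fail in degenerate situations where the chord endpoint on some $P_i$ is adjacent along $P_i$ to $u_i$ or $v_i$. Such collisions are resolved by judiciously choosing a chord among the several that may be available, or by ad hoc arguments tailored to the specific collision.
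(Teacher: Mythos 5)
There is a genuine gap, in fact two. In Part (1) your endgame does not close: after producing a short $(v_1,v_2)$-jump $P'$ you assert that the inequalities $|P_i|>2\ell-|A_i|$ and $|P'|\geq\ell$ "force" an even hole longer than $2\ell$ or a cycle shorter than $2\ell$, but they do not. For instance a type-e short $(v_1,v_2)$-jump with $|P'|=|A_3|=\ell$ is consistent with every inequality you have written down, and you never say which cycle is supposed to violate girth or hole-length. More tellingly, your Part (1) never invokes the hypothesis that $G$ has no degree-$2$ vertex and no $K_1$- or $K_2$-cut, yet that hypothesis is indispensable there: the paper proves (1) by extracting (via Lemma \ref{induced tree}, after minimizing $|P_1\cup P_2|$) an induced tree $T\subseteq P_1\cup P_2$ with leaf set $\{u,v_1,v_2\}$, observing that $C\cup T$ is a balanced $K_4$-subdivision, and applying Lemma \ref{bal K4}(2) --- whose conclusion (2.2), the one yielding $v_1v_2\in E(G)$, is only an alternative to (2.1), which is exactly the cut condition. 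A purely arithmetic contradiction of the kind you sketch cannot exist. (Your appeal to Lemma \ref{find short jump} is also not licensed as stated: the $(v_1,v_2)$-path you build through a chord can have up to four interior vertices in $N(C)$, namely the $P_i$-neighbours of $u$, $v_1$, $v_2$; you flag this but defer it to unspecified "ad hoc arguments".)

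In Part (2) the missing piece is the case where the four ends are distinct but $G[V(P_1\cup P_2)]\setminus E(C)$ is connected, i.e.\ the interiors of $P_1$ and $P_2$ meet or are joined by an edge. Your phrase "after first using any chord between $P_1^*$ and $P_2^*$ to reduce as in Part (1)" does not produce an induced $K_4$-subdivision: with four distinct ends, the induced tree one extracts has four leaves, so $C$ together with it is a prism-like graph or a $K_4$-subdivision with an extra subdivided edge (the configurations of Figure \ref{w4}), and the paper devotes the substantial Lemma \ref{prism} precisely to this situation. Your proposal never uses Lemma \ref{prism}, so this whole case is unproved. Where your argument does overlap with the paper --- the subcase in which $C\cup P_1\cup P_2$ is genuinely induced and crossing, handled by Lemmas \ref{even K4}, \ref{odd K4} and \ref{bal K4} --- it is essentially correct, but that is the easy part of the lemma.
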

\begin{proof}
Set $S:=\{u_1,u_2,v_1,v_2\}$. 
Let $P_1$ and $P_2$ be chosen with $|P_1\cup P_2|$ as small as possible. Then $P_1\cup P_2$ is a forest whose leaf vertices are contained in $S$. Since $P_1$ and $P_2$ are short jumps over $C$, we have that {\bf (a)} $(P_1^*)^*\cup (P_2^*)^*$ and $C$ are anti-complete.

First we consider (1). 
By symmetry we may assume that $|C(u_1,v_2)|\neq\ell$. 
Assume that there is an induced tree $T$ of $P_1\cup P_2$ with $S$ as its set of leaf vertices. Since no vertex outside $C$ has two neighbours in $C$ by Lemma \ref{neighbour in C}, $C\cup T$ is a balanced $K_4$-subdivision by (a) and Lemma \ref{even K4}. When $C\cup T$ is of type-1, since $|C(u_1,v_1)|, |C(u_1,v_2)|>1$, the ear shared by even holes of $C\cup T$ must be $C(v_1,v_2)$, so $v_1v_2\in E(G)$. When $C\cup T$ is of type-$\ell$, since $|C(u_1,v_1)|, |C(u_1,v_2)|>1$, the hole of $C\cup T$ containing $C(v_1,v_2)$ not $C$ must be odd, for otherwise we get a contradiction to Lemma \ref{bal K4} (2.2). Using Lemma \ref{bal K4} (2.2) again, $v_1v_2\in E(G)$.
Hence, it suffices to show that such $T$ exists.

When $P_1\cup P_2$ is not a path, that is, it is  isomorphic to a subdivision of $K_{1,3}$, since $S$ is the set of degree-1 vertices of $P_1\cup P_2$, such $T$ exists by Lemma \ref{induced tree}. Hence, we may assume that $P_1\cup P_2$ is a $(v_1,v_2)$-path. Since $P_1$ and $P_2$ are short jumps of type-o, by Lemma \ref{jump-cycle-parity}, 
there is an edge $uv\in E(G)-E(P_1\cup P_2)$ with $u\in V(P_1^*)$ and $v\in V(P_2^*)$. 
Since $g(G)=2\ell$, either $uu_1\notin E(G)$ or $vu_1\notin E(G)$, say $uu_1\notin E(G)$. 
Since $G[V(P_2\cup P_1(u,v_1))]\del E(C)$ is connected and  has $S$ as its set of degree-1 vertices, it contains such an induced tree $T$ by Lemma \ref{induced tree}, so (1) holds.

Next we consider (2). 
Set $H:=G[V(P_1\cup P_2)]\del E(C)$.
We claim that (2) holds when $H$ is connected. By (a), 
$S$ is the set of degree-1 vertices of $H$. Hence, by Lemma \ref{induced tree}, there is an induced tree $T$ of $H$ containing $S$. 
By (a) and Lemma \ref{neighbour in C}, we have that $C\cup T$ is isomorphic to a graph pictured as Figure \ref{w4}. So (2)  holds from Lemma \ref{prism}.

We may therefore assume that $H$ is not connected.
Since $H$ is connected when $V(P_1^*)\cap V(P_2^*)\neq\emptyset$ or there is some edge linking $P_1^*$ and $P_2^*$, the path $P_1^*$ and $P_2^*$ are anti-complete. When  $P_1$ and $P_2$ are parallel, since they are of type-o, $H$ is connected by Lemma \ref{jump-cycle-parity}, so
$P_1$ and $P_2$ are crossing, implying that $C\cup P_1\cup P_2$ is induced and isomorphic to a subdivision of $K_4$. Since $G$ has no even $K_4$-subdivision by Lemma \ref{even K4}, it is odd or balanced. When $C\cup P_1\cup P_2$ is odd, it follows from Lemma \ref{odd K4} that $u_iu_{3-i}v_i$ is a path on $C$ for some integer $1\leq i\leq2$ as $|P_1|,|P_2|>1$. 
When $C\cup P_1\cup P_2$ is balanced, implying that exactly one of $P_1$ and $P_2$ is of type-o, it follows from Lemma \ref{bal K4} (2.2) that (2) holds.
\end{proof}
\begin{lemma}\label{parallel jumps}
Let $\ell\geq4$ be an integer and $C$ be an even hole of a graph $G\in \mathcal{H}_{\ell}$. Let $P_1$ be a local $(u_1,v_1)$-jump over $C$  across one vertex $s_1$. Let $P_2$ be a short or local $(u_2,v_2)$-jump over $C$. 
Assume that $P_1,P_2$ are parallel and 
$G$ has neither a degree-$2$ vertex nor a $K_1$- or $K_2$-cut. Then the following hold.
\begin{itemize}
\item[(1)] When $P_2$ is local and across one vertex $s_2$, if $P_1,P_2$ do not share an end, then there is a short $(s_i,w_j)$-jump over $C$, where $\{i,j\}=\{1,2\}$ and $w_j\in\{u_j, v_j,s_j\}$.
\item[(2)] When $P_2$ is short and of type-o, $P_1$ and $P_2$ share an end. 
\end{itemize}
\end{lemma}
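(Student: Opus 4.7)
The approach is to reduce both parts to applications of Lemma~\ref{4-vtx} and the structural lemmas on balanced $K_4$-subdivisions, together with a direct construction in a favourable sub-case of~(1).

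First, for positional setup: the parallel hypothesis, together with the fact that the short arc $u_1 s_1 v_1$ of $C$ has only $s_1$ as an interior vertex and the no-shared-end assumption, forces $u_2, v_2$ to lie in the interior of the long arc of $C$ from $u_1$ to $v_1$. For part~(1), symmetrically $u_1, v_1$ lie in the interior of the long arc of $C$ from $u_2$ to $v_2$; and either $s_1 = s_2$, or $s_1$ and $s_2$ lie in the respective long-arc interiors. Picking any neighbour $w$ of $s_1$ in $V(P_1^*)$, the girth bound $g(G) = 2\ell$ applied to the cycles $u_1 s_1 w P_1(w, u_1)$ and $v_1 s_1 w P_1(w, v_1)$ forces $|P_1(u_1, w)|, |P_1(w, v_1)| \geq 2\ell - 2$; analogous bounds hold for $P_2$ when it is local.

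Second, for part~(1) when $s_1 = s_2$: let $a_2 \in V(P_2^*)$ be the neighbour of $s_2 = s_1$ closest to $u_2$ along $P_2$. The path $L := s_1 a_2 P_2(a_2, u_2)$ is induced. Its interior lies in $V(P_2^*)$, which by the local-jump hypothesis is anti-complete to $V(C) \setminus \{u_2, v_2, s_2\}$, and by the choice of $a_2$ no interior vertex of $L$ is further adjacent to $s_1 = s_2$; a short-cycle argument (the cycle $s_1 a_2 P_2(a_2, v_2) v_2 s_1$ would have length below $2\ell$) rules out adjacency to $v_2$. Since $u_2 \notin N_C(s_1) = \{u_1, v_1\}$, the path $L$ is a short $(s_1, u_2)$-jump over $C$, giving the required jump with $i = 1$, $j = 2$, $w_j = u_2$.

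Third, for part~(2) and for part~(1) with $s_1 \neq s_2$: assume for contradiction no short jump of the required form exists. When $s_1$ has a unique $V(P_1^*)$-neighbour $w$, the induced subgraph $\tilde H := G[V(C) \cup V(P_1)]$ is a subdivision of $K_4$ with face cycles $C$, $u_1 s_1 w P_1(w, u_1)$, $v_1 s_1 w P_1(w, v_1)$, and $P_1$ together with the long arc of $C$. Lemma~\ref{even K4} excludes all face cycles being even, and Lemma~\ref{bal K4}(1) rules out the ear $s_1 w$ (of length~$1$) being shared by odd faces; hence $\tilde H$ is a balanced $K_4$-subdivision with one of $u_1 s_1 w P_1(w, u_1)$ or $v_1 s_1 w P_1(w, v_1)$ serving as an even face, whose length must be $2\ell$ by the $\mathcal{H}_\ell$ constraint. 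Incorporating the jump $P_2$ — whose interior must interact with $V(P_1^*)$ by Lemma~\ref{jump-cycle-parity} when $P_1, P_2$ are internally disjoint — produces a further cycle through $V(P_1) \cup V(P_2) \cup V(C)$; a parity and length analysis using $|P_2| \geq \ell + 1$ for part~(2), or the short jumps analogously derived from $P_2$ for part~(1), shows this cycle is an even hole of length exceeding $2\ell$ or a cycle of length below $2\ell$, contradicting $G \in \mathcal{H}_\ell$ or $g(G) = 2\ell$ respectively. The non-generic sub-cases ($s_1$ with multiple $V(P_1^*)$-neighbours, or $V(P_1^*)$ and $V(P_2^*)$ sharing vertices or having multiple edges between them) are handled by applying Lemma~\ref{prism} to the corresponding Figure~\ref{w4}-like sub-structures. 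These contradictions establish part~(2), and in part~(1) force $s_1 = s_2$, reducing the remaining configuration to the second paragraph.

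The main technical obstacle is the structural analysis in this last step: extracting the correct induced $K_4$-subdivision or Figure~\ref{w4}-like sub-structure from $C \cup P_1 \cup P_2$ (including the edges forced by the local-jump structure), and verifying its face-cycle parities and lengths against the tight constraints of $\mathcal{H}_\ell$, while systematically handling the sub-cases based on the interaction pattern of $V(P_1^*)$ and $V(P_2^*)$.
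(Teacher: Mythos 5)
There is a genuine gap here, on two levels. First, your case structure for part~(1) is built around the case $s_1=s_2$, which cannot occur: $u_1s_1v_1$ and $u_2s_2v_2$ are $3$-vertex paths on the hole $C$, so $s_1=s_2$ would give $\{u_1,v_1\}=N_C(s_1)=N_C(s_2)=\{u_2,v_2\}$, contradicting the hypothesis that $P_1,P_2$ share no end. Your second paragraph --- the only place where you actually exhibit the short jump required by~(1) --- is therefore vacuous, and it is also internally inconsistent (you assert $u_2\notin N_C(s_1)$ while $s_1=s_2$ forces $u_2\in N_C(s_2)$; indeed no $(s_1,u_2)$-jump can exist when $s_1u_2\in E(C)$). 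The closing claim that the contradictions of your third paragraph ``force $s_1=s_2$'' is likewise incoherent. What survives is only the third paragraph applied to $s_1\ne s_2$, and there the conclusion of~(1) is existential, so everything rests on the contradiction you derive from assuming no suitable short jump exists.

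That derivation is where the real work lies, and it is not carried out. The sentence ``a parity and length analysis \dots shows this cycle is an even hole of length exceeding $2\ell$ or a cycle of length below $2\ell$'' asserts the conclusion without an argument. The paper's proof at this point is a multi-page case analysis: it first minimizes $|P_1\cup P_2|$, then splits on whether $P_1\cup P_2$ is connected; in the connected case it extracts short $(s_1,s_2)$-, $(s_1,u_2)$- or $(s_1,v_2)$-jumps via Lemma~\ref{find short jump} and kills them with Lemmas~\ref{type-e} and~\ref{4-vtx}(1); in the disconnected case it takes a connecting edge $xy$ given by Lemma~\ref{jump-cycle-parity}, splits on $|\{x,y\}\cap N(C)|$ and on which of $u_1,v_1,u_2,v_2$ the endpoints attach to, and in the two hard sub-cases builds an explicit balanced $K_4$-subdivision violating Lemma~\ref{bal K4}(2.2), respectively an explicit Figure~\ref{w4}(a) configuration violating Lemma~\ref{prism}. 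None of these constructions, nor verifications that the relevant subgraphs are induced and have the face-cycle parities required by those lemmas, appear in your sketch. Finally, deferring the case where $V(P_1^*)\cap V(P_2^*)\ne\emptyset$ to ``Figure~\ref{w4}-like sub-structures and Lemma~\ref{prism}'' is not justified: when the interiors intersect, $C\cup P_1\cup P_2$ need not contain such a structure at all, and the paper instead handles this case by routing an induced path from $s_1$ through $P_1^*\cup P_2^*$ and invoking Lemma~\ref{find short jump}. As written, the proposal identifies the right toolbox but does not contain a proof.
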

\begin{proof}
Assume that $u_1,u_2,v_2,v_1$ appear on $C$ in this order.
Set $S:=\{u_1,u_2,v_1,v_2\}$. By the aim of the lemma, we may assume that $|S|=4$.
Let $P_1,P_2$ be chosen with $|P_1\cup P_2|$ as small as possible. Then $P_1\cup P_2$ is a forest with $S$ as its set of degree-1 vertices.

First we consider the case that $P_1\cup P_2$ is connected. Note that $P_1\cap P_2$ and $C$ are anti-complete by the definitions of local and short jumps. When $P_2$ is local, since there is an $(s_1,s_2)$-path $P$ with $P^*\subseteq (P_1^*)^*\cup(P_2^*)^*$, there is a short $(s_1,s_2)$-jump $R$ with $V(R)\subseteq V(P)$ by Lemma \ref{find short jump}, so (1) holds. Hence, we may assume that $P_2$ is short and of type-o. Following a similar way as above, there are short  $(s_1,u_2)$- and  $(s_1,v_2)$-jumps. By Lemma \ref{type-e} and symmetry we may assume that there is a short $(s_1,u_2)$-jump over $C$ of type-o. Since $P_2$ is also a short jump of type-o, by Lemma \ref{4-vtx} (1), $s_1v_2\in E(G)$, which is not possible as $v_1\neq v_2$. 

Secondly, we consider the case that $P_1\cup P_2$ is not connected. 
By Lemma \ref{jump-cycle-parity}, there is an edge $xy\notin E(P_1\cup P_2)$ with $x\in V(P_1^*)$ and $y\in V(P_2^*)$. If possible, $x,y$ are chosen with $|\{x,y\}\cap N(C)|$ as small as possible. When $|\{x,y\}\cap N(C)|=0$, following a similar way as the last paragraph, we can prove the lemma. So we may assume that $|\{x,y\}\cap N(C)|\geq1$, implying that $(P_1^*)^*$ and $(P_2^*)^*$ are anti-complete by the choice of $xy$. Since $g(G)=2\ell\geq8$, we have $|\{x,y\}\cap N(C)|=1$ by Lemma \ref{jump-length}.

Assume that $x\notin N(C)$, implying $y\in N(C)$. By symmetry assume $yv_2\in E(G)$. By Lemma \ref{find short jump}, there is a short $(s_1,v_2)$-jump $P$ over $C$. So (1) holds when $P_2$ is local. Hence, we may assume that $P_2$ is short.
When  $P$ is of type-o, using Lemma \ref{4-vtx} (1) to $P_2,P$, we get a contradiction again.
Hence, $P$ is of type-e. 
Then $|P|=|C(s_1,v_1,v_2)|=\ell$ by Lemma \ref{jump-length}. Let $y'\in V(P_2^*)$ have a neighbour in $P\cap P_1$ closest to $u_2$ along $P_2$. Note that $y'$ maybe equal to $y$ but can not be adjacent to $y$ as $g(G)=2\ell$. Since $|P|=\ell$, we have $|N(y')\cap V(P\cap P_1)|=1$. Let $x'$ be the vertex in $N(y')\cap V(P\cap P_1)$. 
Then $C\cup P\cup x'y'P_2(y',u_2)$ is a balanced $K_4$-subdivision of type-$\ell$ by
Lemma \ref{even K4}, which does not satisfy Lemma \ref{bal K4} (2.2) as $|C(u_2,v_2)|>1$ and $u_1\neq u_2$, a contradiction.

We may therefore assume that $x\in N(C)$, say $xu_1\in E(G)$, implying $y\notin N(C)$. 
By Lemma \ref{neighbour in C}, symmetry and the case dealt with in the last paragraph, we may further assume that $(P_1^*)^*$ and $P_2$ are anti-complete, and $P_2$ is short. Let $y_1,y_2\in N(x)\cap V(P_2)$ be closest to $u_2,v_2$, respectively. Note that $y_1$ maybe equal to $y_2$. By Lemma \ref{jump-length} and the fact $g(G)=2\ell$, we have $y_1,y_2\notin N(C)$. Let $x_1$ be the neighbour of $s_1$ closest to $x$ along $P_1$. Since $(P_1^*)^*$ and $P_2$ are anti-complete, $C\cup P_1(u_1,x_1)x_1s_1\cup xy_1P_2(y_1,u_2)\cup xy_2P_2(y_2,v_2)$ is pictured as Figure \ref{w4} (a) with $p\in\{0,1\}$, so we get a contradiction to Lemma \ref{prism}.
\end{proof}

\begin{lemma}\label{s-path}
Let $\ell\geq4$ be an integer and  $C$ be an even hole of a graph in $\mathcal{H}_{\ell}$. Let $S$ be a set of ends of all short jumps over $C$. Assume that $G$ has neither a degree-$2$ vertex nor a $K_1$- or $K_2$-cut and some short jump over $C$ is of type-o. 
Then $V(Q^*)\cap S\neq\emptyset$ for any $6$-vertex path $Q$ on $C$.
\end{lemma}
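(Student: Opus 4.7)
The plan is to argue by contradiction. Suppose some $6$-vertex path $Q=q_1q_2q_3q_4q_5q_6$ on $C$ satisfies $V(Q^*)\cap S=\emptyset$, so $q_2,q_3,q_4,q_5\notin S$, and fix (by hypothesis) a short jump $P$ over $C$ of type-o with ends $p_1,p_2\in S$; then $p_1,p_2\in V(C)\setminus\{q_2,q_3,q_4,q_5\}$. The strategy is to use Lemma \ref{xyz} on the two overlapping $3$-vertex subpaths $q_2q_3q_4$ and $q_3q_4q_5$ of $C$ to produce two specific local jumps across one vertex, and then pit them against one another via Lemma \ref{parallel jumps}(1). The type-o jump $P$ serves as the lever, via Lemma \ref{parallel jumps}(2), that pins down the exact form of each local jump.

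First I would apply Lemma \ref{xyz} to $q_2q_3q_4$; its hypotheses are satisfied since Lemma \ref{neighbour in C} supplies the required neighbour condition on $C$ and the connectivity conditions are given. Option (1) is excluded because $q_2,q_4\notin S$. Option (2) would produce a local $(q_2,q_4)$-jump $J$ across $q_3$; since $p_1,p_2\notin\{q_2,q_3,q_4\}$, the two ends of $P$ lie strictly on the long arc of $C$ relative to the chord $q_2q_4$, so $J$ and $P$ are parallel and share no end, contradicting Lemma \ref{parallel jumps}(2). Hence option (3) must hold, and since $q_3\notin S$ it gives a local jump with $q_3$ as one end that is across one vertex; its other end lies at distance $2$ from $q_3$ on $C$, so equals $q_1$ or $q_5$. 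If it were $q_5$, the resulting local $(q_3,q_5)$-jump across $q_4$ would be parallel to $P$ (both $p_1,p_2$ lie off $\{q_3,q_4,q_5\}$) and would share no end with $P$ (since $q_3,q_5\notin S$), again contradicting Lemma \ref{parallel jumps}(2). Hence we obtain a local $(q_1,q_3)$-jump $J_1$ across $q_2$. By the symmetric argument on $q_3q_4q_5$ (where option (2) is ruled out exactly as above, and the end $q_2$ of a local jump $(q_4,q_2)$ across $q_3$ would force the same contradiction), a local $(q_4,q_6)$-jump $J_2$ across $q_5$ exists.

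Finally I would apply Lemma \ref{parallel jumps}(1) to $J_1$ and $J_2$. Both are local jumps across one vertex, with $s_1=q_2$ and $s_2=q_5$. They are parallel: the arc of $C$ determined by the chord $q_1q_3$ that avoids $q_2$ contains both $q_4$ and $q_6$. They share no end, since $|C|=2\ell\geq 8$ forces $q_1,q_3,q_4,q_6$ to be four distinct vertices on $C$. Thus Lemma \ref{parallel jumps}(1) produces a short jump over $C$ one of whose ends lies in $\{s_1,s_2\}=\{q_2,q_5\}$, contradicting $q_2,q_5\notin S$.

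The main technical obstacle is the bookkeeping in the middle step: each invocation of Lemma \ref{parallel jumps}(2) requires verifying both that the candidate local jump is parallel (not crossing) to $P$, which depends on $p_1,p_2$ lying on the correct arc of $C$, and that no endpoint coincidence with $P$ occurs. The hypothesis $q_2,q_3,q_4,q_5\notin S$ is used repeatedly to guarantee this, and is precisely why the statement requires a $6$-vertex (rather than a shorter) path: two overlapping interior $3$-paths are needed to generate both $J_1$ and $J_2$.
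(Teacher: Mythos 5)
Your proof is correct and follows essentially the same route as the paper's: apply Lemma \ref{xyz} to the two overlapping interior $3$-vertex subpaths, use Lemma \ref{parallel jumps}(2) against the fixed type-o short jump to force the two local jumps across one vertex to point outward (your $J_1$, $J_2$ are exactly the paper's $P$, $P'$), and then invoke Lemma \ref{parallel jumps}(1) to place $q_2$ or $q_5$ in $S$, a contradiction. The only cosmetic difference is that the paper first records the blanket claim that no local jump across one vertex has both ends in $Q^*$, whereas you rule out the offending cases inline.
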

\begin{proof}
Set $Q:=s_1w_1w_2w_3w_4s_2$. Assume to the contrary that $V(Q^*)\cap S=\emptyset$. Since some short jump over $C$ is of type-o, by Lemma \ref{parallel jumps} (2), no local jump over $C$ across one vertex has its ends in $Q^*$. Hence, by Lemma \ref{xyz}, 
$w_2$ is an end of a local jump $P$ across one vertex. 
By Lemma \ref{parallel jumps} (2), the other end of $P$ must be $s_1$ and $s_1\in S$. Similarly, there is a local $(s_2,w_3)$-jump $P'$ across $w_4$. By applying Lemma \ref{parallel jumps} (1) to $P,P'$, either $w_1$ or $w_4$ is in a short jump  over $C$, a contradiction as $V(Q^*)\cap S=\emptyset$.
\end{proof}

\begin{proof}[Proof of Theorem \ref{main thm+}.]
Assume not. Let $C$ be an even hole of a graph $G\in \mathcal{H}_{\ell}$ with $\ell>4$. Let $S$ be the set consisting of ends of short jumps over $C$. Since each vertex not in $C$ has at most one neighbour in $C$ by Lemma \ref{neighbour in C}, to get a contradiction, we only need to find a 3-vertex path $xyz$ on $C$ not satisfying Lemma \ref{xyz}. That is, our aim is to find a 3-vertex path $xyz$ on $C$ satisfying the following statements (a)-(c). {\bf (a)} $\{x,y,z\}\cap S=\emptyset$, {\bf (b)} there is no local $(x,z)$-jump across $y$, and {\bf (c)} $y$ is not an end of a local jump over $C$ across  one vertex. 

We claim that there is a 3-vertex path $xyz$ of $C$ satisfying (a)-(c) when $|S|\leq2$.  
When $S=\emptyset$, that is, there is no short jump over $C$, by Lemma \ref{parallel jumps} (1), every pair of local jumps over $C$ across one vertex either share an end or are crossing, so there is a 5-vertex path $Q$ on $C$ containing the ends of all local jumps over $C$ across one vertex. Then each 3-vertex path $xyz$ of $C\del V(Q)$ satisfies (a)-(c). So we may assume that $|S|=2$. 
Set $S:=\{s,t\}$. Since $|C|\geq10$, at least one $(s,t)$-path on $C$ has length at least 5, so all short $(s,t)$-jumps are of type-e by Lemma \ref{s-path}. 
Since $d_C(s,t)=\ell$ by Lemma \ref{jump-length}, there is a 3-vertex path $xyz$ of $C\del S$ satisfying (a)-(c) by Lemma \ref{parallel jumps}.
We may therefore assume that $|S|\geq3$.
By Lemma \ref{type-e}, there is a short jump over $C$ of type-o. Moreover, by Lemma \ref{parallel jumps} (2), no component of $C\del S$ contains the ends of a local jump over $C$ across one vertex. Hence, to find a 3-vertex path $xyz$ on $C$ satisfying (a)-(c), it suffices to {\bf (d)} find a degree-2 vertex $y$ of $C\del S$ that is not an end of a local jump over $C$ across one vertex, as the fact that $y$ has degree-2 in $C\del S$ implies the neighbours of $y$ on $C$ not in $S$.

For any $1\leq i\leq2$, let $P_i$ be a short $(u_i,v_i)$-jump over $C$. By Lemma \ref{type-e}, we may assume that $P_1$ is of type-o. Set $S':=\{u_1,u_2,v_1,v_2\}$. We may assume that $P_1,P_2$ are chosen with $|S'|$ as large as possible. Subject to this condition, we may further assume that $P_2$ is of type-o if possible. 

\begin{claim}\label{2-comp}
$C\del S'$ has exactly two components.
\end{claim}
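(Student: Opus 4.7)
The plan is to prove the claim by a case analysis on $|S'|$, noting $|S'| \leq 4$ is automatic. First I verify $|S'| \geq 3$: since $|S| \geq 3$, Lemma \ref{type-e} (all short jumps of type-e share the same pair of ends) forces the existence of a short jump of type-o, which is exactly what $P_1$ was chosen to be; then any $P_2$ with an endpoint in $S \setminus \{u_1, v_1\}$, which must exist because $|S| \geq 3$, yields $|S'| \geq 3$. Hence $|S'| \in \{3,4\}$.

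If $|S'| = 3$, I may assume $u_1 = u_2$. Lemma \ref{4-vtx}(1) gives $v_1 v_2 \in E(G)$, and since $C$ is an induced cycle this must be a $C$-edge; thus $C \setminus \{u_1, v_1, v_2\}$ splits $C$ into exactly two arcs.

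If $|S'| = 4$, I apply Lemma \ref{4-vtx}(2). In its first option $u_1 u_2, v_1 v_2 \in E(G)$, the set $S'$ is two adjacent pairs on $C$, and removing them yields two components. In its second option, $u_i u_{3-i} v_i$ is a path on $C$ for some $i$; WLOG $u_1 u_2 v_1$ is a path, so $u_2$ lies strictly between $u_1$ and $v_1$. To conclude that $C \setminus S'$ still has exactly two components, I must show that $v_2$ is neither a $C$-neighbour of $u_1$ nor of $v_1$, for otherwise all four points of $S'$ would be consecutive on $C$ and $C \setminus S'$ would collapse to a single arc.

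The main obstacle is this last step. Suppose for contradiction $v_2 u_1 \in E(C)$; the case $v_2 v_1 \in E(C)$ is symmetric. Then $v_2 u_1 u_2 v_1$ are four consecutive vertices of $C$ and $d_C(u_2, v_2) = 2$, so since $\ell \geq 5$ Lemma \ref{jump-length} forces $P_2$ to be of type-o. Because $|C| = 2\ell \geq 10$, there is a 6-vertex subpath of $C$ lying inside $C \setminus S'$; applying Lemma \ref{s-path} (which is available since $P_1$ is of type-o) yields a vertex $w \in S \setminus S'$, and hence a third short jump $P_3 = (w, w')$. Analysing the pairs $(P_1, P_3)$ and $(P_2, P_3)$ through Lemma \ref{4-vtx} and Lemma \ref{parallel jumps}, and using the maximality of $|S'| = 4$ together with the preference for $P_2$ of type-o, will force a configuration incompatible with the assumed four-consecutive arrangement. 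This yields the required contradiction and the claim follows.
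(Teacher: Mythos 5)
Your proposal has a fatal gap at the very point where the real work of this claim lies: you invoke Lemma \ref{4-vtx} unconditionally, but that lemma carries the hypothesis that $P_1$ and $P_2$ are both of type-o whenever they are parallel (and, by the paper's convention, two jumps sharing an end count as parallel). Nothing in the choice of $P_1,P_2$ guarantees this: $P_2$ may well be of type-e and parallel to $P_1$, and in that configuration neither conclusion of Lemma \ref{4-vtx} is available, so the four ends can a priori be spread around $C$ in three or four separate runs. This affects both of your cases: in the $|S'|=3$ case you cannot conclude $v_1v_2\in E(G)$ from Lemma \ref{4-vtx}(1) without knowing $P_2$ is of type-o, and in the $|S'|=4$ case you cannot conclude anything at all. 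Ruling out the "parallel with $P_2$ of type-e and at least three components" situation is precisely the first half of the paper's proof: since $P_2$ is type-e, $d_C(u_2,v_2)=\ell\geq5$ by Lemma \ref{jump-length}, so Lemma \ref{s-path} yields a vertex $s\in S\cap V(C^*(u_2,v_2))$; the jump $P$ at $s$ must be of type-o by Lemma \ref{type-e}, its other end $t$ must lie in $\{u_1,v_1\}$ by the maximality of $|S'|$ (and the secondary preference for a type-o $P_2$), and then Lemma \ref{4-vtx}(1) applied to the two type-o jumps $P,P_1$ forces $s$ adjacent to $u_1$ or $v_1$, which is incompatible with $s$ lying strictly inside $C(u_2,v_2)$. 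None of this is in your write-up, and it is not a routine omission.

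The second problem is that your final step --- ruling out the configuration in which all four ends are consecutive on $C$ --- is only a promissory note ("will force a configuration incompatible with the assumed four-consecutive arrangement"). In the paper this is the longest part of the proof: one produces a further end $s\in S-S'$ via Lemma \ref{s-path}, splits on whether the new jump is of type-o or type-e, possibly produces yet another jump $P'$, and in several branches the resolution is \emph{not} a structural contradiction but the exhibition of a vertex $y$ satisfying condition (d) of the outer argument (a degree-$2$ vertex of $C\del S$ that is not an end of a local jump across one vertex), which finishes the whole theorem rather than the claim itself. Your sketch gives no indication of how the contradiction would materialize, and in fact a direct contradiction is not always what one gets; without the (d)-escape the case analysis does not close. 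As written, the proposal establishes the claim only in the subcase where $P_1,P_2$ are crossing and not all four ends are consecutive.
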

\begin{proof}[Subproof.]
Assume that $C\del S'$ has at least three components. Then $P_1, P_2$ are parallel and $P_2$ is of type-e by Lemma \ref{4-vtx}. Assume that $u_1,u_2,v_2,v_1$ appear in this order on $C$. Since $d_C(u_2,v_2)=\ell$ by Lemma \ref{jump-length}, there is a vertex $s\in S\cap V(C^*(u_2,v_2))$ by Lemma \ref{s-path}. Let $P$ be a short $(s,t)$-jump.
Since $P_2$ is of type-e, by Lemma \ref{type-e},  $P$ is of type-o.
Moreover, by the choice of $P_1, P_2$, we have $t\in\{u_1,v_1\}$. When $t=v_1$, by Lemma \ref{4-vtx} (1), we have $su_1\in E(G)$, implying $u_1=u_2$, which is a  contradiction to the choice of $P_1,P_2$.  In a similar way we can show $t\neq u_1$. Hence, it suffices to show that $C\del S'$ is not connected. 

Assume that $C\del S'$ is connected. Then $P_1, P_2$ are crossing. By Lemma \ref{4-vtx} (2) and symmetry, assume that $u_1 u_2v_1v_2$ is a path on $C$. Then $P_2$ is of type-o by Lemma \ref{jump-length}. By Lemma \ref{s-path}, there is a vertex $s\in S-S'$ in $C^*(u_1,v_2)$. Let $P$ be a short $(s,t)$-jump. If possible, $P$ is chosen of type-o. Assume that $P$ is of type-e. Then $S=S'\cup\{s,t\}$ by the choice of $P$. Since $d_C(s,t)=\ell\geq5$, by Lemma \ref{s-path}, $t\in\{u_2,v_1\}$, say $t=v_1$. Let $Q$ be the $(s,t)$-path on $C$ containing $v_2$. 
Let $y\in V(Q)$ with $d_C(y,v_1)=3$. Then $y$ is not in a local jump across one vertex by Lemma \ref{parallel jumps} (2), so $y$ satisfies (d) as $|Q|=\ell$ and $S=S'\cup\{s,t\}$. So $P$ is of type-o.

When $P$ is parallel with $P_1, P_2$, we have $\{s,t\}=\{u_1,v_2\}$ by Lemma \ref{4-vtx}, a contradiction as $s\notin S'$. So by symmetry we may assume that $P,P_2$ are crossing, implying $t=v_1$. Then $su_1\in E(G)$ by Lemma \ref{4-vtx} (1). That is, $su_1 u_2v_1v_2$ is a path on $C$. 
We claim that all short jumps over $C$ of type-o have ends in $S'\cup\{s\}$. Assume to the contrary that there is a short $(u,v)$-jump $P'$ of type-o with $u\notin S'\cup\{s\}$. Since $v_1$ is an end shared by $P,P_1$, 
we have $v_1\neq v$ by Lemma \ref{4-vtx} (1), so $P',P_2$ are parallel. By Lemma \ref{4-vtx}, $uv_2\in E(G)$ and $v\in\{u_1, u_2\}$ as $u\notin S'\cup\{s\}$. Hence, $P,P'$ do not satisfy Lemma \ref{4-vtx} (2), a contradiction.

Since $|C(s,v_2)|\geq5$, by Lemma \ref{s-path}, $S\cap V(C^*(s,v_2))\neq\emptyset$, so there is a short $(s',t')$-jump $P'$ with $s'\notin S'\cup\{s\}$. 
Then $P'$ is of type-e by the claim proved in the last paragraph. Moreover, by Lemma \ref{type-e}, $S=S'\cup\{s,s',t'\}$. Since $d_C(s',t')=\ell$, by Lemma \ref{s-path}, $t'\in\{u_1, u_2,v_1\}$.
Since $d_C(s,v_1)=3\neq\ell$, the jumps $P,P'$ is not crossing by Lemma \ref{4-vtx} (2), so $t'=v_1$. Let $y\in V(C^*(s',v_2))$ satisfy $d_C(y,v_2)=2$. Then $y$ is not an end of a local jump across one vertex by Lemma \ref{parallel jumps} (2), so $y$ satisfies (d) as $S=S'\cup\{s,s'\}$ and $d_C(s',v_1)=\ell$. 
\end{proof}

Since $3\leq|S'|\leq4$, by \ref{2-comp} and symmetry, we may assume that exactly one of the following holds.
\begin{itemize}
\item[(1)] $u_1=u_2$ 
and $v_1v_2\in E(G)$, or
\item[(2)] $u_1u_2$ and $v_1v_2$ are anti-complete edges on $C$, or
\item[(3)] $v_2$ is anti-complete to the 3-vertex path $u_1u_2v_1$ on $C$.
\end{itemize}

\begin{claim}\label{S'=4}
(1) can not happens, so $|S'|=4$.
\end{claim}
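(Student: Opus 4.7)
The plan is to assume Case~(1) holds (so $u_1=u_2$ and $v_1v_2\in E(G)$) and derive a contradiction by producing a $3$-vertex path $xyz$ on $C$ satisfying (a) $\{x,y,z\}\cap S=\emptyset$, (b) no local $(x,z)$-jump over $C$ across $y$, and (c) $y$ is not an end of a local jump over $C$ across one vertex. Such a path contradicts Lemma \ref{xyz} and hence the standing assumption that $G$ has neither a degree-$2$ vertex nor a $K_1$- or $K_2$-cut.

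First I would pin down $|S|=3$. For any short jump $P$ with ends $\{s,t\}$, the maximality of $|S'|=3$ applied to the pairs $(P_1,P)$ and $(P_2,P)$ yields $|\{u_1,v_1,s,t\}|\le 3$ and $|\{u_1,v_2,s,t\}|\le 3$; since $v_1v_2\in E(G)$ prevents $\{v_1,v_2\}$ from being the end-pair of any jump, these force $u_1\in\{s,t\}$. Writing $T=\{t:(u_1,t)\text{-short jump exists}\}\supseteq\{v_1,v_2\}$, applying the trichotomy (1)--(3) to each pair $(P_1,P_t)$ (which again achieves $|S'|=3$, so it must also fall in case (1) by the argument of \ref{2-comp}) forces $v_1t\in E(G)$; symmetrically $v_2t\in E(G)$. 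Since $v_1v_2$ is already an edge and $g(G)\ge 2\ell\ge 10$ precludes triangles, we conclude $T=\{v_1,v_2\}$ and so $|S|=3$.

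Next I would constrain the local jumps across one vertex using Lemma~\ref{parallel jumps}(2). For any such local jump $L$ with ends $\{a,b\}$ and middle vertex $m$, $\{u_1,v_1\}\cap\{a,b,m\}\ne\emptyset$: otherwise $L$ and $P_1$ would be parallel without sharing an end, contradicting Lemma~\ref{parallel jumps}(2). If $P_2$ is also of type-o, the same argument for $P_2$ gives $\{u_1,v_2\}\cap\{a,b,m\}\ne\emptyset$, and combined with $v_1v_2\in E(G)$ this forces $u_1\in\{a,b,m\}$. This already gives condition (b) for free: any local $(x,z)$-jump across $y$ is a local jump across one vertex with end-or-middle set $\{x,y,z\}$, which, if disjoint from $S\supseteq\{u_1,v_1\}$, contradicts the above constraint.

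It remains to find $xyz$ satisfying (a) and (c). Decompose $C=Q_1\cup v_1v_2\cup Q_2$ where $Q_i$ is the $(u_1,v_i)$-arc of $C$ avoiding $v_{3-i}$, so $|Q_1|+|Q_2|=2\ell-1$. The ``forbidden'' set $F\subseteq V(C)\setminus S$ of possible ends of local jumps across one vertex is explicit: the two $C$-neighbors of $u_1$, the two vertices at $C$-distance $2$ from $u_1$, and (in the sub-case $P_2$ is of type-e) also the $C$-neighbor of $v_1$ on $Q_1$ and a vertex at $C$-distance $2$ from $v_1$ (some of these coincide when $|Q_1|$ is small). In the sub-case $P_2$ is of type-e, Lemma \ref{jump-length} forces $|Q_2|=\ell$ and $|Q_1|=\ell-1$, so the interior of $Q_2$ has $\ell-1\ge 4$ vertices; picking $y$ at $C$-distance $3$ from $u_1$ inside $Q_2$ places $y$ outside $F$ and its two $C$-neighbors inside the interior of $Q_2$, hence outside $S$. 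In the sub-case $P_2$ is of type-o, $|F|\le 4$ and the argument is even more direct. The main obstacle is the tight numerology at $\ell=5$ in the type-e sub-case, where $|C|=10$, $|S|=3$, and $|F|=6$ leave exactly one valid middle vertex; a direct check using $|Q_2|=5$ verifies that this vertex and its two $C$-neighbors yield the required path $xyz$.
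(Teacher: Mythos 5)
There is a genuine gap, and it sits exactly where the real difficulty of Case (1) lies: your claim that $|S|=3$. To get $T=\{v_1,v_2\}$ you apply Lemma \ref{4-vtx}(1) to the pairs $(P_1,P_t)$ and $(P_2,P_t)$ and invoke ``the trichotomy'' for pairs other than the one actually chosen. But Lemma \ref{4-vtx} only applies to two parallel short jumps when \emph{both} are of type-o, and Claim \ref{2-comp} (hence the trichotomy (1)--(3)) is proved only for the specific pair $P_1,P_2$ chosen to maximize $|S'|$ with the type-o tie-break; neither transfers to an arbitrary pair $(P_1,P_t)$. Consequently a short type-e $(u_1,t_0)$-jump with $t_0\notin\{v_1,v_2\}$ (necessarily with $d_C(u_1,t_0)=\ell$, so $t_0$ deep inside the long arc) escapes your argument entirely, and $|S|=3$ does not follow. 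This is precisely the configuration the paper has to confront: by Lemma \ref{s-path} the longer of the two arcs $C(u_1,v_1)$, $C(u_1,v_2)$ has length at least $5$ and must carry a jump-end $s$ in its interior; the paper then shows (via the maximal choice and Lemma \ref{4-vtx}(1)) that the jump at $s$ is a type-e $(s,u_1)$-jump, forces $S=S'\cup\{s\}$, and derives the contradiction from Lemma \ref{s-path} applied to the length-$\ell$ sub-arc $C(s,u_1)$, whose interior misses $S$.

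Two further remarks. First, if $|S|=3$ were actually available, your long construction of the path $xyz$ would be unnecessary: with $S=\{u_1,v_1,v_2\}$ and $v_1v_2\in E(C)$, the two remaining arcs sum to $2\ell-1\geq 9$, so one has length at least $5$ and its interior avoids $S$, contradicting Lemma \ref{s-path} in one line. Second, the construction itself contains errors: the deduction ``combined with $v_1v_2\in E(G)$ this forces $u_1\in\{a,b,m\}$'' is false (the triple $\{a,b,m\}$ could contain both $v_1$ and $v_2$, e.g.\ a local jump across $v_1$ with ends $v_2$ and the other $C$-neighbour of $v_1$), and the step ``its two $C$-neighbors \dots\ hence outside $S$'' silently reuses the unproved $|S|=3$. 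So the proposal does not close Case (1).
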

\begin{proof}[Subproof.]
Assume not. 
Since $d_C(u_1,v_2)=\ell$ when $P_2$ is of type-e, by symmetry assume that $|C(u_1,v_1)|<|C(u_1,v_2)|$. Then $|C(u_1,v_2)|\geq5$. By Lemma \ref{s-path}, $S\cap C^*(u_1,v_2)\neq\emptyset$. Let $P$ be a short $(s,t)$-jump with $s\in V(C^*(u_1,v_2))$. By the choice of $P_1,P_2$, the jump $P_2$ is of type-o and $t=u_1$, for otherwise we should choose $P,P_2$, not $P_1,P_2$. 
By Lemma \ref{4-vtx} (1), $P$ is of type-e. 
Hence, by the choice of $P_1,P_2$ and Lemma \ref{type-e}, we have $S=S'\cup\{s\}$. However, since $|C(s,u_1)|=\ell\geq5$, by Lemma \ref{s-path} again, $S\cap V(C^*(s,u_1))\neq\emptyset$, a contradiction to the facts $S=S'\cup\{s\}$ and $s\in V(C^*(u_1,v_2))$.
\end{proof}

We claim that either $S=S'$ or $P_2$ is of type-o. Assume not. Since $S'\subsetneqq S$, there is a short $(s,t)$-jump $P$ with $s\notin S'$. Since $P_2$ is of type-e, by Lemma \ref{type-e}, $P$ is of type-o, so by the choice of $P_1,P_2$, we have $t\in\{u_1,v_1\}$, say $t=u_1$. Since $P_1$ is also of type-o, $v_1s\in E(G)$ by Lemma \ref{4-vtx} (1). Since every pair of short jumps over $C$ of type-o must share an end by the choice of $P_1,P_2$, we have $S=S'\cup\{s\}$ by Lemma \ref{type-e}. Moreover, since $d_C(u_2,v_2)=\ell$ by Lemma \ref{jump-length}, it follows from Lemma \ref{s-path} that $P_2$ can not be parallel with $P$ and $P_1$. Since $\ell>3$, $t=u_1$ and $v_1s\in E(G)$, by Lemma \ref{4-vtx} (2), $P_2$ also can not be crossing with $P$ and $P_1$, so $P_2$ is parallel with exactly one of $\{P,P_1\}$. Since $|S'|=4$ by \ref{S'=4}, by Lemma \ref{4-vtx} again, $P_2$ is parallel with  $P$ and $s\in \{u_2,v_2\}$, a contradiction to the fact $s\notin S'$.

We claim that $S\neq S'$, so $P_2$ is of type-o by the claim proved in the last paragraph. Assume to the contrary that $S=S'$. Since $|C|\geq10$, by Lemma \ref{s-path} and \ref{2-comp}, $|Q_1|=|Q_2|=2$, where $Q_1,Q_2$ are the components of $C\del S'$. Let $y_i$ be the unique degree-2 vertex of $Q_i$ for  each integer $1\leq i\leq 2$. Then $y_i$ is an end of a local jump $R_i$ across one vertex, for otherwise $y_i$ satisfies (d). Note that when (3) happens, 
the other end of $R_i$ is $u_1$ or $v_1$ by Lemma \ref{parallel jumps} (2). By applying Lemma \ref{parallel jumps} (1) to $R_1,R_2$, some end of $Q_1$ or $Q_2$ is an end of a short jump, so it should be in $S$, a contradiction to the fact $S=S'$. 


Assume that (2) happens. We claim that all short jumps of type-o have their ends in $S'$. Assume not. Let $P$ be a short $(s,t)$-jump of type-o with $s\notin S'$. When $t\in S'$, say $t=u_1$, we have $sv_1\in E(G)$ by Lemma \ref{4-vtx} (1), so $P,P_2$ do not satisfy Lemma \ref{4-vtx} (2) as $s\notin S'$. Hence, $t\notin S'$, so $P$ is parallel or crossing with $\{P_1, P_2\}$. By Lemma \ref{4-vtx} (2), $P$ must be crossing with $\{P_1, P_2\}$, so $|C|=6$ by Lemma \ref{4-vtx} (2) again, a contradiction. This proves the claim.

Since $S\neq S'$, there is a short $(s,t)$-jump $P$ with $s\notin S'$. Then $P$ is of type-e by the claim proved in the last paragraph, 
so $S=S'\cup\{s,t\}$ by Lemma \ref{type-e}.
Since $|C|\neq6$, by Lemma \ref{4-vtx} (2), $P$ is crossing at most one of $\{P_1,P_2\}$. Moreover, since $d_C(s,t)=\ell\geq5$ and $S=S'\cup\{s,t\}$, by Lemma \ref{s-path}, $P$ is crossing exactly one of $\{P_1,P_2\}$. By symmetry assume that $t=u_1$ and $P,P_2$ are crossing. By applying Lemma \ref{4-vtx} (2) first to $P,P_2$ and then to $P,P_1$, the jumps $P_1,P_2$ are parallel. Let $y\in V(C^*(u_2,v_2))$ with $d_C(y,u_2)=2$. Then $y$ is not in a local jump across one vertex by Lemma \ref{parallel jumps} (2). Moreover, since $d_C(s,u_1)=\ell$, the vertex $y$ is a degree-2 vertex of $C\del S$, so $y$ satisfies (d). So we may assume that (3) happens. 

\begin{claim}\label{location}
Let $R$ be a short $(u,w)$-jump $R$ of type-o with $u\notin S'$. Then either there is a vertex satisfying (d) or $w=u_2$ and $uv_2\in E(G)$.
\end{claim}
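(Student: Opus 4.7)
The plan is to case-split on the location of the second endpoint $w$ of $R$ and apply Lemma~\ref{4-vtx} to the pairs $(R,P_1)$ and $(R,P_2)$, noting that all three of $R$, $P_1$, $P_2$ are short jumps of type-o. If $w=u_2$, then $R$ and $P_2$ share the endpoint $u_2$ and Lemma~\ref{4-vtx}(1) yields $uv_2\in E(G)$, the desired second alternative. If $w=v_2$, the same lemma gives $uu_2\in E(G)$, forcing $u\in\{u_1,v_1\}\subseteq S'$ and contradicting $u\notin S'$.

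Next suppose $w\in\{u_1,v_1\}$; by symmetry take $w=u_1$. Lemma~\ref{4-vtx}(1) on the shared endpoint $u_1$ of $R$ and $P_1$ gives $uv_1\in E(G)$, so $u=v_1'$. Applying Lemma~\ref{4-vtx}(2) to $R,P_2$ on their four distinct endpoints, every three-vertex path option on $C$ forces some endpoint adjacent to $u_2$ on $C$ and therefore into $S'$, or else invokes $u_1v_2\in E(G)$ and contradicts condition~(3); the only surviving possibility is the matching option $uv_2\in E(G)$, i.e.\ $v_1'v_2\in E(G)$, placing the five vertices $u_1,u_2,v_1,v_1',v_2$ consecutively on $C$, all belonging to $S$. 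It remains to exhibit a degree-$2$ vertex of $C\setminus S$ on the remaining arc $A$, which has $2\ell-5\geq 5$ vertices. By Lemma~\ref{s-path}, consecutive $S$-vertices on $C$ lie within distance $4$, so components of $C\setminus S$ have size at most $3$; a size-$3$ component gives (d) at once via its middle vertex. Otherwise every component of $C\setminus S$ in $A$ has size at most $2$, so $A$ contains additional $S$-vertices placed very restrictively; each such extra $S$-vertex is the endpoint of a further short jump, and applying Lemma~\ref{4-vtx} to that auxiliary jump paired with $P_2$ (and using Lemma~\ref{type-e} in the type-e case) systematically yields contradictions with condition~(3) or with $|C|\geq 2\ell$, ruling out every configuration in which (d) fails.

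Finally suppose $w\notin S'$, so $u,w,u_2,v_2$ are four distinct vertices. Lemma~\ref{4-vtx}(2) applied to $R,P_2$ rules out every matching option (each places $u$ or $w$ adjacent to $u_2$ on $C$ and hence into $S'$) and every three-vertex path option except $u\cdot v_2\cdot w$ on $C$; thus $\{u,w\}$ is the pair of $C$-neighbours of $v_2$. Applying Lemma~\ref{4-vtx}(2) once more to $R$ paired with $P_1$, the matching options force $\{v_2^-,v_2^+\}=\{u_1',v_1'\}$ and thereby make $C$ consist of exactly the six vertices $u_1,u_2,v_1,v_1',v_2,u_1'$, contradicting $|C|\geq 2\ell\geq 10$; every path option either puts a $C$-neighbour of $v_2$ into $\{u_1,v_1,u_2\}$ (contradicting~(3)) or demands that a single vertex have three distinct $C$-neighbours, which is impossible. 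So this case does not occur.

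The main obstacle is the last step of the case $w\in\{u_1,v_1\}$: upgrading the five-consecutive-$S$-cluster into an actual vertex satisfying (d) on the long remaining arc. One must enumerate all consistent placements of any additional short jumps whose endpoints lie on that arc and eliminate each via further applications of Lemmas~\ref{4-vtx} and~\ref{s-path}; it is at this step that the hypothesis $\ell\geq 5$ is fully consumed.
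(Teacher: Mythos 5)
Your handling of the easy cases matches the paper: $w=u_2$ gives $uv_2\in E(G)$ by Lemma~\ref{4-vtx}~(1); $w=v_2$ is impossible because the forced edge $uu_2$ would put $u\in\{u_1,v_1\}\subseteq S'$; and the configuration in which $uv_2w$ is a path on $C$ collapses to $|C|=6$ via Lemma~\ref{4-vtx} applied to $R,P_1$. The problem is the remaining case $w\in\{u_1,v_1\}$, which is exactly the case that matters. After correctly pinning down the five consecutive vertices $u_1,u_2,v_1,u,v_2$ on $C$, you do not actually produce a vertex satisfying (d) on the remaining arc: the passage ``each such extra $S$-vertex \dots systematically yields contradictions \dots ruling out every configuration in which (d) fails'' is an assertion, not an argument, and you concede as much in your closing paragraph. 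This is a genuine gap, not a routine verification: the whole difficulty of the surrounding proof of Theorem~\ref{main thm+} is precisely the bookkeeping of where additional short-jump ends can sit on $C$, and nothing you have written constrains them on the long arc. A secondary soft spot in the same case: a size-$3$ component of $C\setminus S$ does \emph{not} give (d) ``at once'' via its middle vertex $y$; one must still rule out that $y$ is an end of a local jump across one vertex whose other end lies in $S$, which Lemma~\ref{parallel jumps}~(2) does not do automatically.

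The paper closes this case with a single structural observation that you are missing: when $w\notin\{u_2,v_2\}$ and $uv_2w$ is not a path on $C$, the pair $R,P_2$ is a pair of short type-o jumps whose four ends form two anti-complete edges of $C$ --- i.e.\ exactly configuration (2) of the main proof, with $R$ playing the role of $P_1$. The vertex satisfying (d) is then supplied by the already-completed analysis of case (2); no new enumeration is needed. Either import that reduction, or carry out in full the enumeration you have only sketched; as written, the proof is incomplete.
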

\begin{proof}[Subproof.]
Since $u\notin S'$ and $R,P_2$ are of type-o, we have $w\neq v_2$ by Lemma \ref{4-vtx} (1). When $uv_2w$ is a path on $C$, by applying Lemma \ref{4-vtx} (2) to $P_1,R$, we have $|C|=6$, a contradiction. Hence, $uv_2w$ is not a path on $C$. When $w\neq u_2$, since $w\neq v_2$ and $u\notin S'$, replacing $P_1,P_2$ by $R,P_2$ we are back to a case with the same structure as (2), so by symmetry we can find a vertex $y$ satisfying (d). So we may assume that $w=u_2$. By applying Lemma \ref{4-vtx} (1) to $P_2,R$, we have $uv_2\in E(G)$.
\end{proof}

Let $P$ be a short $(s,t)$-jump with $s\notin S'$.
We may assume that $P$ is chosen of type-e if possible. Assume that $P$ is of type-o. By \ref{location}, we have $t=u_2$ and $sv_2\in E(G)$. Since the other end of all short jumps of type-o that have $u_2$ as an end must be in $\{v_2,s\}$ by Lemma \ref{4-vtx} (1), all short jumps over $C$ have their ends in $S'\cup \{s\}$ by \ref{location}. Moreover, by the choice of $P$, we have 
$S=S'\cup \{s\}$. Since $|C|\geq10$, some component $Q$ of $C\del S$ has at least three vertices. Let $y\in V(Q)$ be chosen with $d_C(y,S)$ as large as possible, implying $d_C(y,S)\geq2$. Then $y$ is not an end of a local jump across one vertex by Lemma \ref{4-vtx} (2), so $y$ satisfies (d). Hence, $P$ is of type-e, implying $S\subseteq S'\cup \{v,s,t\}$ for some vertex $v\in V(C)\cap N(v_2)$ by Lemma \ref{type-e} and \ref{location}. 

First we consider the case that $P,P_2$ are crossing. Since $v_2$ anti-complete to $u_1u_2v_1$ implies $d_C(u_2,v_2)\geq3$, by Lemma \ref{4-vtx} (2), there are edges $e,f\in E(C)$ with $V(\{e,f\})=\{s,t,u_2,v_2\}$. 
By symmetry we may assume $t=u_1$, implying $sv_2\in E(G)$ and $s\in V(C^*(v_1,v_2))$. Let $s'$ be the vertex in $V(C)\cap N(v_2)$ not equal to $s$. When $s'\in S$, by \ref{location}, there is a short $(s',u_2)$-jump $R$ of type-o, which is not possible as $R,P$ do not satisfy Lemma \ref{4-vtx} (2). So $s'\notin S$, implying $S\subseteq S'\cup \{s\}$. Let $y\in V(C(u_1,s',v_2))$ with $d_C(y,u_1)=2$. Since $d_C(u_1,s)=\ell$ by Lemma \ref{jump-length}, we have $y\neq s'$. By Lemma \ref{parallel jumps} (2), $y$ is not in a local jump across one vertex. So $y$ satisfy (d) as $S\subseteq S'\cup \{s\}$.
Secondly, we consider the case that $P,P_2$ are parallel. By symmetry assume that $s\in V(C^*(v_1,v_2))$. Since $d_C(s,t)=\ell$ and $S\subseteq S'\cup \{v,s,t\}$, by Lemma \ref{s-path}, $t\in\{u_2,v_2\}$ and when $t=v_2$, the vertex in $N(v_2)\cap V(C^*(v_1,v_2))$ is in $S$. When $t=v_2$, choose $y\in V(C(v_1,v_2))$ with $d_C(y,v_2)=3$; and when $t=u_2$, choose $y\in V(C(v_1,v_2))$ with $d_C(y,v_1)=2$. Since $s\in V(C^*(v_1,v_2))$ and $d_C(s,t)=\ell$, by Lemma \ref{parallel jumps} (2), $y$ is not in a local jump across one vertex,  so $y$ satisfies (d).
\end{proof}

\section{Acknowledgments}
This research was partially supported by grants from the Natural Sciences Foundation of Fujian Province (grant 2025J01486) and the Open Project Program of Key Laboratory of Discrete Mathematics with Applications of Ministry of Education (grant J20250601), Fuzhou University. The author thanks Zijian Deng for carefully reading this paper and finding an err in the proof of Lemma \ref{prism}. 

\end{document}